\documentclass{article}
\usepackage{amssymb}
\usepackage{graphicx}
\usepackage{amsmath}
\renewcommand{\boxed}[1]{#1}
\usepackage{float}
\usepackage{array}
\usepackage{comment}
\usepackage[T1]{fontenc}
\usepackage[utf8]{inputenc}

\newtheorem{theorem}{Theorem}[section]

\newtheorem{corollary}[theorem]{Corollary}

\newtheorem{definition}[theorem]{Definition}
\newtheorem{example}[theorem]{Example}

\newtheorem{lemma}[theorem]{Lemma}

\newtheorem{proposition}[theorem]{Proposition}
\newtheorem{remark}[theorem]{Remark}

\newenvironment{proof}[1][Proof]{\textbf{#1.} }{\hfill\rule{0.5em}{0.5em}}

\title{On Quadrature Surface Free Boundary Problems for the Riemannian $p$-Laplacian}
\author{Ababacar Sadikhe DJITE$^{1,\,}$\footnote{ababacarsadikhe.djite@ucad.edu.sn} ,  Diaraf SECK$^{1,\,}$ \footnote{diaraf.seck@ucad.edu.sn}\\\\
$^{1}$ Ecole Doctorale de Math\'ematiques et Informatique U.C.A.D. Dakar,  S\'en\'egal\\
Laboratoire des Math\'ematiques de la D\'ecision et \\
d'Analyse Num\'erique, BP 16889, Dakar Fann, S\'enegal.}

\begin{document}
\maketitle

\begin{abstract}
We study a quadrature-surface free boundary problem driven by the
Riemannian $p$-Laplacian on a smooth compact finite-dimensional
Riemannian manifold. We formulate the problem as a shape optimization
problem and develop an intrinsic admissible class based on a uniform
Riemannian $RC$-$GNP$ condition~\cite{DS3}. We establish compactness of admissible
domains and stability of the associated Dirichlet problems under strong
$RC$-$GNP$ convergence. In particular, the states converge strongly in
$W^{1,p}$ and the associated $p$-torsional energy is continuous.
Under additional boundary regularity, we derive the first variation of
the shape functional and obtain the free-boundary Euler--Lagrange
condition
\[
|\nabla_g u_\Omega|_g^p=\sigma H_{\partial\Omega}+k^p.
\]
We also formulate the contact optimality inequality, prove the
Riemannian comparison of second fundamental forms at tangential contact,
and obtain a sufficient condition on a reference domain which rules out
contact and yields a solution of the free-boundary problem. Explicit
radial examples on geodesic balls of the round sphere are included.
\end{abstract}

{\bf Keywords:} $p$-Laplacian, free boundary, quadrature surface,
Riemannian manifold, stability, shape optimization\\
{\bf Mathematics Subject Classification:} 49Q10, 35J92, 53C21

\section{Introduction}

Free boundary problems couple an elliptic equation posed on an unknown
domain with a geometric condition on the boundary. In the present work
we consider a quadrature-surface type problem driven by the nonlinear
$p$-Laplacian and formulate it intrinsically on a compact Riemannian
manifold.

Let $(M^n,g)$ be a smooth, compact, connected Riemannian manifold
without boundary, with $n\geq2$. Let $K\subset M$ be a prescribed
compact set and let $f$ be a non-negative source supported in $K$.
For $1<p<\infty$, the Riemannian $p$-Laplacian is
\[
\Delta_{g,p}u
=
\operatorname{div}_g\!\left(
|\nabla_g u|_g^{p-2}\nabla_g u
\right).
\]
We seek a domain $\Omega\Subset M$, containing $K$, and a function
$u_\Omega$ such that
\[
\begin{cases}
-\Delta_{g,p}u_\Omega=f & \text{in }\Omega,\\
u_\Omega=0 & \text{on }\partial\Omega,\\
|\nabla_g u_\Omega|_g^p
=\sigma H_{\partial\Omega}+k^p
& \text{on }\partial\Omega,
\end{cases}
\]
where $\sigma>0$, $k>0$, and $H_{\partial\Omega}$ is the Riemannian
mean curvature with respect to the outward unit normal.

The study of free-boundary problems associated with the $p$-Laplacian
has a well-established variational and analytical background; in
particular, sufficient conditions for the existence of free boundaries
for the $p$-Laplacian were investigated by Barkatou~\cite{Barkatou2002}.
Quadrature-surface free-boundary problems were subsequently studied in
the Euclidean setting by Barkatou, Seck and Ly~\cite{BLS}. Their work
provides an important motivation for the present problem. More generally,
shape optimization and shape differentiation provide the natural
variational framework for the analysis of such domain-dependent
functionals; see, for example, Delfour and Zol\'esio~\cite{DZ},
Henrot and Pierre~\cite{HP}, and Sokolowski and Zolesio~\cite{sozo}.

The Riemannian formulation that directly motivates the present work was
developed by Djit\'e and Seck~\cite{DS3} using the Laplace--Beltrami
operator. The Riemannian approach to shape optimization has also been
developed from a geometric viewpoint; see Schulz~\cite{Schu}. The present
paper extends the Riemannian quadrature-surface framework of
Djit\'e and Seck~\cite{DS3} to the nonlinear $p$-Laplacian.

The central quantity is the $p$-torsion associated with the domain.
If $u_\Omega$ solves the Dirichlet problem, we set
\[
T_{p,g}(\Omega)
=
\int_\Omega |\nabla_g u_\Omega|_g^p\,dV_g
=
\int_\Omega fu_\Omega\,dV_g.
\]
The corresponding shape functional is
\[
\mathcal J_{p,g}(\Omega)
=
-T_{p,g}(\Omega)
+\sigma P_g(\Omega)
+k^pV_g(\Omega),
\]
where
\[
P_g(\Omega)=\mathcal H_g^{n-1}(\partial\Omega),
\qquad
V_g(\Omega)=\int_\Omega dV_g.
\]
The expected stationarity condition is precisely the free-boundary condition displayed above.

A main difficulty is that a general Riemannian manifold has no global
Euclidean convex structure. We therefore impose uniform geometric
control on a class of admissible domains through defining functions,
normal tubular neighborhoods, and a fixed reference domain $C$.
This provides the compactness needed in the direct method of the
calculus of variations and allows us to transport nonlinear Dirichlet
problems between nearby domains.

The paper is organized as follows. Section~2 introduces the geometric
framework, Sobolev spaces, the Riemannian $p$-Laplacian, the admissible
class, and the comparison principles. Section~3 proves stability of the
Dirichlet problem under strong $RC$-$GNP$ convergence. Section~4 records
the resulting continuity properties of the $p$-torsion and of the
geometric terms. Section~5 states the main existence, comparison, and
optimality results. Section~6 derives the Hadamard formula and the
first-order optimality system, while Section~7 contains the proofs of
the main theorems. Section~8 gives explicit radial examples on the
round sphere and verifies the Euclidean large-radius limit.

\section{Framework and notation}

Throughout the paper, $(M^n,g)$ is smooth, compact, connected and
without boundary, with $n\geq2$. We write $d_g$ for the Riemannian
distance, $dV_g$ for the Riemannian volume measure and $dS_g$ for the
induced hypersurface measure.

For $1<p<\infty$, set
\[
p'=\frac{p}{p-1}.
\]
For an open set $\Omega\subset M$,
\[
W^{1,p}(\Omega)
=
\{u\in L^p(\Omega):\nabla_g u\in L^p(\Omega;TM)\},
\]
and $W_0^{1,p}(\Omega)$ is the closure of $C_c^\infty(\Omega)$ in
$W^{1,p}(\Omega)$.

The Riemannian gradient is characterized by
\[
g(\nabla_g u,X)=du(X)
\]
for every smooth vector field $X$. The $p$-Laplace operator is
\[
\Delta_{g,p}u
=
\operatorname{div}_g\left(
|\nabla_g u|_g^{p-2}\nabla_g u
\right).
\]
In local coordinates,
\[
\Delta_{g,p}u
=
\frac1{\sqrt{|g|}}
\partial_i
\left(
\sqrt{|g|}\,
|\nabla_g u|_g^{p-2}
g^{ij}\partial_j u
\right).
\]

Let $K\subset M$ be nonempty and compact. Fix a connected reference domain
$C\subset M$ such that
\[
K\subset C,
\qquad
\partial C\in C^{2,\alpha},
\qquad 0<\alpha<1.
\]
We assume that $\partial C$ has a uniform tubular neighborhood:
there is $\rho_C>0$ such that
\[
\mathcal E_C:\partial C\times(-\rho_C,\rho_C)\to M,
\qquad
\mathcal E_C(x,t)=\exp_x(t\nu_C(x)),
\]
is a diffeomorphism onto its image.

Let
\[
f\in L^{p'}(M),
\qquad
f\geq0,
\qquad
f\not\equiv0,
\qquad
\operatorname{supp}f\subset K.
\]
For pointwise boundary and shape-differentiation arguments we shall
impose stronger regularity on $f$ and on the domains when needed.

\subsection{Riemannian perimeter and volume}

For a sufficiently regular domain $\Omega$,
\[
P_g(\Omega)=\mathcal H_g^{n-1}(\partial\Omega),
\qquad
V_g(\Omega)=\int_\Omega dV_g.
\]
If $X$ is a smooth vector field with flow $\Phi_t$, then
\[
\Omega_t=\Phi_t(\Omega)
\]
and
\[
\left.\frac d{dt}V_g(\Omega_t)\right|_{t=0}
=
\int_{\partial\Omega}g(X,\nu)\,dS_g.
\]

\subsection{Mean curvature convention}

For the outward unit normal $\nu$ we define
\[
\mathrm{II}_{\partial\Omega}(X,Y)
=
g(\nabla_X\nu,Y),
\qquad
X,Y\in T\partial\Omega,
\]
and
\[
H_{\partial\Omega}
=
\operatorname{tr}_{g_{\partial\Omega}}
\mathrm{II}_{\partial\Omega}.
\]
With this convention, the first variation of perimeter is
\[
\left.\frac d{dt}P_g(\Omega_t)\right|_{t=0}
=
\int_{\partial\Omega}
H_{\partial\Omega}g(X,\nu)\,dS_g.
\]

\subsection{The Riemannian $RC$-$GNP$ condition (Djit\'e--Seck~\cite{DS3})}
\label{def:RC-GNP}

The admissible class is defined intrinsically. A domain $\Omega$
satisfies the uniform Riemannian $RC$-$GNP$ condition relative to $C$
if:

\begin{enumerate}
\item
\[
C\subset\Omega;
\]

\item $\partial\Omega\in C^{2,\alpha}$;

\item there exist constants $A>0$, $c_0>0$, $\rho_0>0$ and a defining
function $h_\Omega\in C^{2,\alpha}(M)$ such that
\[
\Omega=\{h_\Omega>0\},
\qquad
\partial\Omega=\{h_\Omega=0\},
\]
with
\[
\|h_\Omega\|_{C^{2,\alpha}(M)}\leq A,
\qquad
|\nabla_g h_\Omega|_g\geq c_0
\quad\text{when }|h_\Omega|\leq\rho_0;
\]

\item the normal exponential map of $\partial\Omega$ is uniformly
nondegenerate: for some $\rho_1>0$ independent of $\Omega$,
\[
\mathcal E_\Omega(x,t)=\exp_x(t\nu_\Omega(x))
\]
is a diffeomorphism on
$\partial\Omega\times(-\rho_1,\rho_1)$.
\end{enumerate}

We denote by
\[
\mathcal A_C(M)
\]
the class of connected open sets satisfying these conditions and
\[
K\subset C\subset\Omega\subset M.
\]

\subsection{Convergence and compactness of admissible domains}
For compact sets $A,B\subset M$, let
\[
d_H(A,B)=\max\left\{\sup_{x\in A}d_g(x,B),
\sup_{y\in B}d_g(y,A)\right\}
\]
denote the Hausdorff distance. We shall also use the compact convergence
of open sets: $\Omega_j\to\Omega$ compactly if every compact subset of
$\Omega$ is eventually contained in $\Omega_j$ and every compact subset
of $M\setminus\overline\Omega$ is eventually contained in
$M\setminus\overline{\Omega_j}$.

Following the definition used in~\cite{DS3}, we say that $\Omega_j\to\Omega$ in the strong $RC$-$GNP$ topology if,
after choosing uniformly controlled defining functions, the boundaries
converge in $C^2$ and there exist $C^1$ diffeomorphisms $\Phi_j:M\to M$
such that
\[
\Phi_j(\Omega)=\Omega_j,
\qquad
\Phi_j\longrightarrow\operatorname{Id}
\quad\text{in }C^1(M).
\]
The compactness theorem below shows that, under the uniform defining
function and tubular-neighborhood bounds, this ambient realization is
available after extraction. Thus the formulation gives exactly the
geometric convergence needed to identify the varying Sobolev spaces with
a fixed reference space.

The following elementary geometric lemma makes explicit the only point in the compactness argument where an ambient deformation is constructed.

\begin{lemma}[Ambient realization of small normal graphs]
Let $\Sigma\subset M$ be a compact $C^{2,\alpha}$ hypersurface with a fixed tubular neighborhood of radius $r>0$. If $\eta_j\in C^2(\Sigma)$ and $\|\eta_j\|_{C^2(\Sigma)}\to0$, then, for all sufficiently large $j$, there exist $C^1$ diffeomorphisms $\Phi_j:M\to M$ such that
\[
\Phi_j(\Sigma)=\{\exp_x(\eta_j(x)\nu_\Sigma(x)):x\in\Sigma\},\qquad
\|\Phi_j-\operatorname{Id}\|_{C^1(M)}\to0.
\]
Moreover, if $\eta_j$ is supported in a prescribed tubular neighborhood, the construction may be chosen supported in a slightly larger tubular neighborhood.
\end{lemma}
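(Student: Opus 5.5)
We build $\Phi_j$ directly in Fermi coordinates around $\Sigma$ and extend it by the identity away from $\Sigma$. Fix $0<r_1<r$ and use the tubular diffeomorphism
\[
\mathcal E_\Sigma:\Sigma\times(-r,r)\to U_r,\qquad \mathcal E_\Sigma(x,t)=\exp_x(t\nu_\Sigma(x)),
\]
which by hypothesis is a diffeomorphism onto its image. Since $\Sigma$ is only $C^{2,\alpha}$, the normal field $\nu_\Sigma$ is $C^{1,\alpha}$. Hence $\mathcal E_\Sigma$ is a $C^{1}$ diffeomorphism with $C^1$ inverse, and this is the reason only $C^1$ control is claimed. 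Choose a smooth cutoff $\chi:\mathbb R\to[0,1]$ with $\chi\equiv1$ on $[-r_1/2,r_1/2]$, $\operatorname{supp}\chi\subset(-r_1,r_1)$, and $|\chi'|\le 4/r_1$.

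Define $\Psi_j:\Sigma\times(-r,r)\to\Sigma\times(-r,r)$ by
\[
\Psi_j(x,t)=\bigl(x,\;t+\chi(t)\,\eta_j(x)\bigr),
\]
and set $\Phi_j=\mathcal E_\Sigma\circ\Psi_j\circ\mathcal E_\Sigma^{-1}$ on $U_r$, with $\Phi_j=\operatorname{Id}$ outside $U_{r_1}$. The two definitions agree on the overlap, because $\Psi_j$ is the identity for $|t|\ge r_1$. At $t=0$ we have $\chi(0)=1$, so $\Phi_j(x)=\exp_x(\eta_j(x)\nu_\Sigma(x))$. This gives $\Phi_j(\Sigma)$ equal to the normal graph, as required.

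Next we show $\Phi_j$ is a diffeomorphism. For each fixed $x$, the map $t\mapsto t+\chi(t)\eta_j(x)$ has derivative $1+\chi'(t)\eta_j(x)\ge 1-4\|\eta_j\|_{C^0}/r_1>0$ once $j$ is large. It equals the identity near $\pm r_1$, so it is a strictly increasing bijection of $(-r,r)$. Therefore $\Psi_j$ is a bijection, and its Jacobian is block triangular with positive diagonal entries, so it is a $C^1$ diffeomorphism (in fact $C^2$ in $(x,t)$). Consequently $\Phi_j$ is a $C^1$ diffeomorphism of $M$.

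The main point is the $C^1$ estimate. We have
\[
\Psi_j-\operatorname{Id}=(0,\chi\eta_j),
\]
whose $C^1$ norm on $\Sigma\times(-r,r)$ is bounded by $C(r_1)\|\eta_j\|_{C^1(\Sigma)}\to0$. Conjugating by the fixed $C^1$ diffeomorphism $\mathcal E_\Sigma$, whose inverse is also $C^1$ on the compact set $\overline{U_{r_1}}$, preserves convergence to the identity in $C^1$. This follows from the chain rule together with uniform continuity of $D\mathcal E_\Sigma$ on compacts. The $C^1$ distance is measured in a fixed finite atlas, or equivalently via the Sasaki metric.

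Finally, if the $\eta_j$ are supported in a prescribed region of a tubular neighborhood, $\Phi_j-\operatorname{Id}$ is supported in $\mathcal E_\Sigma(\operatorname{supp}\eta_j\times[-r_1,r_1])$. Taking $r_1$ small then gives the last assertion. Only $\|\eta_j\|_{C^1}\to0$ was used; the $C^2$ hypothesis is needed only to keep the graphs $C^2$-close, which is used elsewhere.
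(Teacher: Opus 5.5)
Your construction is correct and is essentially the paper's own proof: you work in Fermi coordinates, shift the normal coordinate $t\mapsto t+\chi(t)\eta_j(x)$ with a cutoff satisfying $\chi(0)=1$, get injectivity from $1+\chi'(t)\eta_j(x)>0$, and extend by the identity outside the tube. You also make explicit several points the paper passes over quickly, namely surjectivity, the fact that $\mathcal E_\Sigma$ is only $C^1$, the chain-rule conjugation estimate, and a support argument (shrinking $r_1$) for the loosely worded final assertion, which the paper does not treat separately.
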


\begin{proof}
Let $\mathcal T_r(\Sigma)$ be the tubular neighborhood and write each point in it uniquely as $\exp_x(s\nu_\Sigma(x))$. Choose $\chi\in C_c^1((-r,r))$ with $\chi=1$ near $0$ and set
\[
\Psi_j\!\left(\exp_x(s\nu_\Sigma(x))\right)
=
\exp_x\!\left((s+\chi(s)\eta_j(x))\nu_\Sigma(x)\right).
\]
Extend $\Psi_j$ by the identity outside $\mathcal T_r(\Sigma)$. Since $\|\eta_j\|_{C^1}\to0$, the differential of $\Psi_j$ converges uniformly to the identity; after increasing $j$ if necessary, $\Psi_j$ is a local $C^1$ diffeomorphism everywhere. Its normal coordinate is $s\mapsto s+\chi(s)\eta_j(x)$, whose derivative is uniformly positive for large $j$, so the map is injective in the tubular neighborhood. Together with the identity outside the neighborhood, this yields a global $C^1$ diffeomorphism. Because $\chi(0)=1$, one has
\[
\Psi_j(\Sigma)=\{\exp_x(\eta_j(x)\nu_\Sigma(x)):x\in\Sigma\}.
\]
The same estimates give $\|\Psi_j-\operatorname{Id}\|_{C^1(M)}\to0$. Taking $\Phi_j=\Psi_j$ proves the lemma.
\end{proof}

\begin{theorem}[Compactness of the admissible class]
\label{thm:compactness-RC-GNP}
Let $(\Omega_j)\subset\mathcal A_C(M)$. Then, after extraction of a
subsequence, there exists $\Omega\in\mathcal A_C(M)$ such that
\[
\Omega_j\to\Omega
\]
in the strong $RC$-$GNP$ topology. In particular,
\[
d_H(\overline{\Omega_j},\overline\Omega)\to0,
\qquad
\chi_{\Omega_j}\to\chi_\Omega
\quad\text{strongly in }L^1(M,dV_g).
\]
\end{theorem}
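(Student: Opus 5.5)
The plan is to apply Arzel\`a--Ascoli to the uniformly bounded defining functions. Then I will use the uniform nondegeneracy of the gradient and of the normal exponential map to write the boundaries $\partial\Omega_j$, for large $j$, as small normal graphs over the limit boundary. The preceding lemma on ambient realization of small normal graphs then produces the diffeomorphisms $\Phi_j$.

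\textbf{Step 1 (limit defining function).} The family $(h_{\Omega_j})$ is bounded by $A$ in $C^{2,\alpha}(M)$, and $M$ is compact. By Arzel\`a--Ascoli, a subsequence converges in $C^{2,\beta}(M)$ for every $\beta<\alpha$ to some $h\in C^{2,\alpha}(M)$ with $\|h\|_{C^{2,\alpha}}\le A$. Set $\Omega=\{h>0\}$. On $\{|h|<\rho_0\}$ we eventually have $|h_{\Omega_j}|\le\rho_0$, so uniform convergence of gradients gives $|\nabla_g h|_g\ge c_0$ on $\{|h|<\rho_0\}$. Hence $\partial\Omega=\{h=0\}$ is a $C^{2,\alpha}$ hypersurface and property (3) holds with the same constants. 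For the inclusions, note that $h_{\Omega_j}>0$ on $C$, so $h\ge0$ on $\overline C$. Points of $C$ where $h=0$ would be interior minima with $\nabla h=0$, contradicting the gradient bound; therefore $C\subset\Omega$ and $K\subset\Omega$.

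\textbf{Step 2 (tubular neighborhood and graphs).} Work in the tubular neighborhood of $\partial\Omega$. The tubular radius of $\partial\Omega$ is at least $\rho_1$ because it is controlled by curvature bounds and by the injectivity of the normal map. Curvature passes to the limit in $C^2$. For injectivity, if two normal geodesic segments of $\partial\Omega$ of length less than $\rho_1$ met, the corresponding segments for nearby $\partial\Omega_j$ would nearly meet, contradicting the uniform diffeomorphism property; a limiting argument on a slightly smaller radius $\rho_1'<\rho_1$ makes this precise. This gives property (4), possibly with $\rho_1$ replaced by a fixed smaller constant; I would accept this harmless adjustment of the class constants.

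Along each normal geodesic $s\mapsto\exp_x(s\nu(x))$, the function $h_{\Omega_j}$ has $s$-derivative bounded away from zero near $s=0$, since $\partial_s h(\exp_x(s\nu))=-|\nabla h|$ at $s=0$. The implicit function theorem then gives a unique zero $s=\eta_j(x)$ with $\|\eta_j\|_{C^2(\partial\Omega)}\to0$, because $h_{\Omega_j}\to h$ in $C^2$. Away from the tube, $|h|\ge\delta>0$, so $h_{\Omega_j}$ has the same sign as $h$ for large $j$. Thus $\partial\Omega_j$ is exactly the normal graph of $\eta_j$ and $\Omega_j$ lies on the correct side of it. The Ambient realization lemma then yields $C^1$ diffeomorphisms $\Phi_j\to\operatorname{Id}$ with $\Phi_j(\partial\Omega)=\partial\Omega_j$. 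Since the normal coordinate is monotone, $\Phi_j(\Omega)=\Omega_j$.

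\textbf{Step 3 (consequences).} Connectedness of $\Omega$ follows because $\Omega=\Phi_j^{-1}(\Omega_j)$ is a homeomorphic image of a connected set. Hausdorff convergence follows from $\sup|\Phi_j-\mathrm{Id}|\to0$ together with $\overline{\Omega_j}=\Phi_j(\overline\Omega)$. For the $L^1$ convergence, $\Omega_j\triangle\Omega$ is contained in the tube $\{|s|\le\|\eta_j\|_\infty\}$ around $\partial\Omega$, whose volume is $O(\|\eta_j\|_\infty)\to0$.

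The main obstacle is Step 2: showing that the limit boundary retains a uniform tubular neighborhood, i.e.\ that injectivity of the normal exponential map survives the limit, and that the boundaries converge as graphs rather than only as level sets. I would handle injectivity by contradiction, using sequences of near-collision points and the $C^2$ convergence. I would handle the graph structure by the implicit function argument above, with uniform constants depending only on $A$, $c_0$, $\rho_0$ and $\rho_1$.
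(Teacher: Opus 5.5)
Your proposal follows the paper's proof essentially step by step. The shared steps are: a $C^2$ limit $h$ of the uniformly bounded defining functions, passage of the gradient bound to the limit, the interior-minimum argument for $C\subset\Omega$, normal graphs over $\partial\Omega$ from the implicit function theorem, the ambient realization lemma for $\Phi_j$, and connectedness of $\Omega$ through the homeomorphism $\Phi_j$. Where you depart from the paper, it is mostly to add care it omits. You check that $\Omega_j$ lies on the correct side of the graph, using the sign of $h$ off the tube. You also confront property (4) for the limit, which the paper never verifies. Your $L^1$ argument via the volume of a thin tube around $\partial\Omega$ is an equivalent, quantitative alternative to the paper's a.e.\ convergence plus dominated convergence.

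The step to repair is your treatment of (4).

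\emph{The concession is not harmless.} Accepting a smaller $\rho_1$ changes the conclusion: $\rho_1$ is fixed in the definition of $\mathcal A_C(M)$, the statement asserts $\Omega\in\mathcal A_C(M)$, and the existence theorem relies on exactly this closedness. It is also unnecessary.

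\emph{The near-collision argument does not work as stated.} It invokes a ``uniform diffeomorphism property'' that the definition does not provide. Property (4) gives no quantitative bound on $\mathcal E_{\Omega_j}^{-1}$, so normal segments that merely nearly meet contradict nothing. Likewise, curvature bounds alone give only some focal radius, not $\rho_1$.

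\emph{A clean replacement is the distance identity.} Injectivity of $\mathcal E_{\Omega_j}$ on $\partial\Omega_j\times(-\rho_1,\rho_1)$ forces $d_g(\mathcal E_{\Omega_j}(y,t),\partial\Omega_j)=|t|$ for $|t|<\rho_1$. Indeed, a minimizing segment from the point to $\partial\Omega_j$ meets $\partial\Omega_j$ orthogonally, so it would otherwise give a second preimage. This identity passes to the limit, because $d_g(\cdot,\partial\Omega_j)\to d_g(\cdot,\partial\Omega)$ uniformly and the unit normals converge by the $C^1$ convergence of $h_j$. Hence every normal geodesic of $\partial\Omega$ realizes the distance to $\partial\Omega$ up to length $\rho_1$. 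This has two consequences:
\begin{itemize}
\item it rules out focal points before $\rho_1$;
\item it rules out two distinct normal segments of length $d<\rho_1$ ending at the same point. Follow one of them to the common endpoint, then the extension of the other beyond it. This gives a broken curve from $\partial\Omega$ of the same length, and shortening its corner contradicts the identity.
\end{itemize}
So (4) holds for $\Omega$ with the same $\rho_1$. In any case, (4) on $(-\rho,\rho)$ for every $\rho<\rho_1$ already implies it on $(-\rho_1,\rho_1)$, since injectivity and local invertibility are checked on pairs of points.

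Two smaller points, both shared with the paper:
\begin{itemize}
\item Because $\nu_\Omega$ is only $C^{1,\alpha}$, the implicit function theorem yields $\eta_j\in C^{1,\alpha}$ with $\|\eta_j\|_{C^1}\to0$, not $C^2$ convergence. This suffices, since the construction in the realization lemma only uses $C^1$ smallness.
\item Your limit argument proves the gradient bound on $\{|h|<\rho_0\}$, not on the closed set required in (3); an interior local minimum of $h$ at height exactly $\rho_0$ is not excluded. So ``with the same constants'' holds only if (3) is read with a strict inequality.
\end{itemize}
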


\begin{proof}
Choose defining functions $h_j=h_{\Omega_j}$ satisfying the uniform
$C^{2,\alpha}$ bound and the nondegeneracy condition in the definition
of $\mathcal A_C(M)$. By the compact embedding
$C^{2,\alpha}(M)\subset C^2(M)$, after extraction,
\[
h_j\longrightarrow h\qquad\text{in }C^2(M).
\]
Moreover, the uniform $C^{2,\alpha}$ bound passes to the limit by the
lower semicontinuity of the Hölder seminorm of the second derivatives;
hence
\[
h\in C^{2,\alpha}(M).
\]
The nondegeneracy condition also passes to the limit:
\[
|\nabla_g h|_g\ge c_0
\qquad\text{on }\{|h|\le\rho_0\}.
\]
Hence $0$ is a regular value of $h$ and
\[
\Omega:=\{h>0\}
\]
has a $C^{2,\alpha}$ boundary.

Since $C\subset\Omega_j$ for every $j$, we have $h_j\ge0$ on $C$ and
therefore $h\ge0$ on $C$. If $h(x_0)=0$ for some $x_0\in C$, then
$x_0$ is an interior minimum point of $h$ on $C$, so
$\nabla_g h(x_0)=0$, contradicting the nondegeneracy condition. Thus
\[
C\subset\Omega.
\]

Because $0$ is a regular value of $h$ and $h_j\to h$ in $C^1$, the
implicit function theorem with parameters gives, for all sufficiently
large $j$, a unique small normal graph representation
\[
\partial\Omega_j
=
\left\{
\exp_x\bigl(\eta_j(x)\nu_\Omega(x)\bigr):
x\in\partial\Omega
\right\},
\]
where
\[
\|\eta_j\|_{C^2(\partial\Omega)}\longrightarrow0.
\]
The common tubular neighborhood allows these normal graphs to be
extended, using a cutoff in the normal variable, to ambient
$C^1$ diffeomorphisms
\[
\Phi_j:M\to M,
\qquad
\Phi_j(\Omega)=\Omega_j,
\qquad
\Phi_j\to\operatorname{Id}\quad\text{in }C^1(M).
\]
In particular, $\Phi_j$ is a homeomorphism from $\Omega$ onto
$\Omega_j$. Since every $\Omega_j$ is connected, $\Omega$ is connected
as well.
The same normal-graph representation gives
\[
d_H(\overline{\Omega_j},\overline{\Omega})\to0.
\]
Moreover, $\partial\Omega$ is a $C^{2,\alpha}$ hypersurface and therefore has
zero $n$-dimensional Riemannian volume. Hence
$\chi_{\Omega_j}\to\chi_\Omega$ almost everywhere on $M$, and dominated
convergence yields
\[
\chi_{\Omega_j}\to\chi_\Omega
\qquad\text{strongly in }L^1(M,dV_g).
\]
Thus $\Omega_j\to\Omega$ in the strong $RC$-$GNP$ topology.
\end{proof}

\subsection{Comparison and Hopf principle}

If $C\subset\Omega$ and $f\geq0$, the comparison principle for the
Riemannian $p$-Laplacian gives
\[
0\leq u_C\leq u_\Omega
\qquad\text{in }C.
\]
The argument is based on the strict monotonicity of the map
\[
A_x(\xi)=|\xi|_g^{p-2}\xi,
\qquad \xi\in T_xM,
\]
namely
\[
\bigl(A_x(\xi)-A_x(\eta)\bigr)\mathbin{\cdot_g}(\xi-\eta)>0
\qquad\text{whenever }\xi\neq\eta.
\]

For the subsequent free-boundary argument, we distinguish the
weak comparison principle from the stronger boundary point information.
The weak comparison principle follows directly from the strict
monotonicity of the $p$-Laplace flux. By contrast, the strong comparison
principle for degenerate or singular quasilinear operators requires
additional hypotheses and is not regarded here as an automatic
consequence of the boundary regularity alone.

For the boundary point argument, we use a Hopf-type boundary point
principle for the $p$-Laplacian at regular contact points satisfying an
interior sphere condition. Such results are available for the
$p$-Laplacian; see, for instance,
\cite{MikayelyanShahgholian2015} and the references therein.
The Riemannian version is obtained locally in geodesic coordinates,
where the smooth metric coefficients preserve the local quasilinear
structure of the operator.

Accordingly, whenever the strong comparison principle is invoked below,
we explicitly assume that it applies to the ordered pair of solutions
under consideration. Likewise, the Hopf boundary point principle is
invoked only at contact points for which its hypotheses are satisfied.
No strong comparison or Hopf derivative inequality is asserted at a
singular contact point.

\begin{remark}[Intrinsic formulation and pullback metric]
The state equation is intrinsically defined by the Riemannian metric
$g$. Indeed, the gradient is the metric dual of the differential, the
divergence is the trace of the covariant derivative, and $dV_g$ is the
Riemannian volume measure. Hence
\[
-\Delta_{g,p}u
=-\operatorname{div}_g\bigl(|\nabla_g u|_g^{p-2}\nabla_g u\bigr)=f
\]
is independent of any Euclidean background structure.

More importantly for the stability analysis, if
$\Phi:\Omega\to\Omega'$ is a diffeomorphism, the state equation on
$\Omega'$ can be pulled back to the fixed domain $\Omega$. The
resulting weak equation is expressed in terms of the pullback metric
$\Phi^*g$ and the transported source. Thus the stability argument is
naturally formulated on a fixed domain for a family of uniformly
equivalent Riemannian metrics.
\end{remark}

\subsection{The energy structure and the role of the source}

The functional
\[
\mathcal E_\Omega(v)
=
\frac1p\int_\Omega|\nabla_gv|_g^p\,dV_g
-\int_\Omega fv\,dV_g
\]
contains the complete variational structure of the state equation.
The first term is strictly convex on $W_0^{1,p}(\Omega)$, while the
source term is linear. Therefore the unique weak solution is not only
a critical point but the unique global minimizer of $\mathcal E_\Omega$.
This observation is useful in several places: it yields the energy
identity, gives uniform estimates, and provides a convenient route to
stability under perturbation of the metric and the domain.

The source term is assumed to be supported in the fixed compact set
$K$. Since every admissible domain contains $K$, the forcing does not
change when the boundary is deformed away from $K$. This is one of the
reasons for introducing a fixed core $C$ with $K\subset C$: the nonlinear
state problem can be compared on the common region $C$, while the
geometric part of the optimization is allowed to vary outside it.

If $f\geq0$, the weak maximum principle gives $u_\Omega\geq0$. When
$f\not\equiv0$ and $\Omega$ is connected, the strong maximum principle
implies
\[
u_\Omega>0\qquad\text{in }\Omega.
\]
Under the boundary regularity required below, Hopf's principle then
implies that at a regular boundary point
\[
\partial_{\nu_\Omega}u_\Omega<0.
\]
Thus the outward normal derivative has the sign compatible with the
geometric convention used in the free-boundary condition.

\subsection{Energy estimates and monotonicity with respect to domains}

\begin{lemma}[Uniform Poincar\'e inequality on $\mathcal A_C(M)$]
\label{lem:uniform-poincare}
There exists a constant $C_P>0$, depending only on $M$, $g$, $p$ and the
fixed set $C$, such that
\[
\|v\|_{L^p(\Omega)}\le C_P\|\nabla_g v\|_{L^p(\Omega)}
\qquad\text{for every }\Omega\in\mathcal A_C(M)
\text{ and every }v\in W_0^{1,p}(\Omega).
\]
\end{lemma}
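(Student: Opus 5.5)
The plan is to extend functions by zero to the closed manifold $M$ and reduce the statement to a Poincar\'e inequality on $M$ for functions vanishing on a geodesic ball of fixed radius. For $v\in W_0^{1,p}(\Omega)$, let $\tilde v$ denote its extension by zero. Then $\tilde v\in W^{1,p}(M)$, with $\|\tilde v\|_{L^p(M)}=\|v\|_{L^p(\Omega)}$ and $\|\nabla_g\tilde v\|_{L^p(M)}=\|\nabla_g v\|_{L^p(\Omega)}$. The issue is that constants on $M$ violate any Poincar\'e inequality. So the real content is a \emph{uniform} quantitative statement that $M\setminus\overline\Omega$ is not too small, for every $\Omega\in\mathcal A_C(M)$.

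\textbf{Step 1: a uniform exterior ball.} I will use the uniform nondegeneracy of the normal exponential map in condition 4 of the $RC$-$GNP$ definition, together with the fact that $\partial\Omega\neq\emptyset$. This requires $\Omega\neq M$, which I read as implicit in $\Omega\Subset M$ from the introduction; this point must be stated explicitly. Fix $x\in\partial\Omega$ and set $y=\exp_x(\tfrac{\rho_1}{2}\nu_\Omega(x))$. Since $\mathcal E_\Omega$ is a diffeomorphism on $\partial\Omega\times(-\rho_1,\rho_1)$, every point of the tube has a unique foot point and $d_g(\cdot,\partial\Omega)=|t|$ there. Hence $d_g(y,\partial\Omega)=\rho_1/2$, and $y$ lies on the exterior side because $\nu_\Omega$ is outward. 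It follows that
\[
B_g(y,r)\subset M\setminus\overline\Omega,\qquad r:=\rho_1/2,
\]
with $r$ independent of $\Omega$. Consequently $\tilde v=0$ on $B_g(y,r)$.

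\textbf{Step 2: Poincar\'e on $M$ for functions vanishing on some $r$-ball.} I will show that there is $C_P$ such that $\|w\|_{L^p(M)}\le C_P\|\nabla_g w\|_{L^p(M)}$ for every $w\in W^{1,p}(M)$ vanishing a.e.\ on some ball $B_g(y,r)$, where $y\in M$ is arbitrary. The proof is by contradiction. Suppose $w_k$, vanishing on $B_g(y_k,r)$, satisfy $\|w_k\|_{L^p}=1$ and $\|\nabla_g w_k\|_{L^p}\to0$. By compactness of $M$ we may assume $y_k\to y_\ast$, and by Rellich--Kondrachov on the compact manifold $M$ we may assume $w_k\to w$ in $L^p(M)$. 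Weak lower semicontinuity gives $\nabla_g w=0$, and $M$ is connected, so $w\equiv c$. For large $k$ we have $B_g(y_\ast,r/2)\subset B_g(y_k,r)$, hence $w=0$ on $B_g(y_\ast,r/2)$, which forces $c=0$. This contradicts $\|w\|_{L^p}=1$.

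\textbf{Conclusion.} Applying Step 2 to $w=\tilde v$ yields the lemma. The resulting constant depends on $M$, $g$, $p$ and $\rho_1$; since $\rho_1$ is fixed in the definition of the admissible class, this matches the stated dependence.

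\textbf{Main obstacle.} The delicate point is Step 1, namely the uniform lower bound on the exterior. The argument needs $\Omega\neq M$, and it needs the normal tube to yield genuine distance control on the exterior side. If condition 4 were only local, I would instead use the uniform $C^{2,\alpha}$ bound $A$ and the gradient bound $c_0$ on $h_\Omega$: at a boundary point, moving in the direction $-\nabla_g h_\Omega$ a distance of order $\min(\rho_0/A,\,c_0/A)$ keeps $h_\Omega<0$ on a ball of comparable radius, which again gives a uniform exterior ball.
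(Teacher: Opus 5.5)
Your proof is correct, but it takes a different route from the paper's.

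\textbf{The paper's route.} The paper argues by contradiction over the admissible class itself. It takes $\Omega_j\in\mathcal A_C(M)$ and $v_j\in W_0^{1,p}(\Omega_j)$ violating the inequality. It then invokes the compactness theorem for $\mathcal A_C(M)$ to obtain a limit domain $\Omega$ and $C^1$ diffeomorphisms $\Phi_j\to\operatorname{Id}$ with $\Phi_j(\Omega)=\Omega_j$. It pulls back to $w_j=v_j\circ\Phi_j\in W_0^{1,p}(\Omega)$ with metrics $g_j=\Phi_j^*g$. Rellich on the fixed domain $\Omega$ gives a limit that is constant, by connectedness of $\Omega$, and lies in $W_0^{1,p}(\Omega)$, hence is zero.

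\textbf{Your route.} You extract a single geometric fact from condition 4: a uniform exterior ball of radius $\rho_1/2$. You then run the compactness argument once, on the fixed closed manifold $M$, where the only varying datum is the center of that ball, controlled by compactness of $M$.

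\textbf{What each buys.} Your route needs neither the compactness theorem for $\mathcal A_C(M)$ nor the ambient diffeomorphisms $\Phi_j$, whose construction is the most delicate part of the paper's framework. It uses connectedness of $M$ rather than of the limit domain. It also makes the true dependence of $C_P$ explicit: on $M,g,p,\rho_1$, and not really on $C$. The paper's constant likewise depends on the class parameters, implicitly through the compactness step. The paper's route, in turn, is economical within its own structure, since it reuses the same pullback machinery later needed for the stability theorem.

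\textbf{On $\Omega\neq M$.} Your remark that $\Omega\neq M$ must be assumed is apt. The lemma is false for $\Omega=M$, since constants lie in $W_0^{1,p}(M)$. The paper's step that a constant in $W_0^{1,p}(\Omega)$ must vanish silently uses the same fact.

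\textbf{What to write out.} The one point you should make explicit is the claim $d_g(\exp_x(t\nu_\Omega(x)),\partial\Omega)=|t|$ for $|t|<\rho_1$. Take a closest point $z\in\partial\Omega$ and a minimizing geodesic from $z$. By the first variation it leaves $\partial\Omega$ orthogonally, so it is a normal geodesic, and injectivity of $\mathcal E_\Omega$ forces $z=x$. Then observe that the outward normal segment $\{\exp_x(s\nu_\Omega(x)):0<s\le\rho_1/2\}$ and the connected ball $B_g(y,\rho_1/2)$ never meet $\partial\Omega$. The segment starts on the exterior side, so both lie in $M\setminus\overline\Omega$.
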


\begin{proof}
Suppose, by contradiction, that no such uniform constant exists. Then there
are $\Omega_j\in\mathcal A_C(M)$ and $v_j\in W_0^{1,p}(\Omega_j)$ such that
\[
\|v_j\|_{L^p(\Omega_j)}=1,
\qquad
\|\nabla_g v_j\|_{L^p(\Omega_j)}\longrightarrow0.
\]
By the compactness theorem for the admissible class, after extraction we may
assume that $\Omega_j\to\Omega$ in the strong $RC$-$GNP$ topology, with
$\Omega\in\mathcal A_C(M)$, and there are $C^1$ diffeomorphisms
$\Phi_j:M\to M$ such that $\Phi_j(\Omega)=\Omega_j$ and
$\Phi_j\to\operatorname{Id}$ in $C^1(M)$. Set
\[
g_j=\Phi_j^*g,
\qquad
w_j=v_j\circ\Phi_j.
\]
Then $w_j\in W_0^{1,p}(\Omega)$ and, by the change-of-variables identity,
\[
\|w_j\|_{L^p(\Omega,g_j)}=1,
\qquad
\|\nabla_{g_j}w_j\|_{L^p(\Omega,g_j)}
=\|\nabla_gv_j\|_{L^p(\Omega_j,g)}\longrightarrow0.
\]
Since $g_j\to g$ uniformly and the metrics are uniformly equivalent, $(w_j)$
is bounded in $W^{1,p}(\Omega)$. After extraction, $w_j\rightharpoonup w$
weakly in $W^{1,p}(\Omega)$ and strongly in $L^p(\Omega)$. The uniform
convergence $g_j\to g$ and the vanishing gradient norms imply
$\nabla_gw=0$, so $w$ is constant on the connected domain $\Omega$.
Because $w_j\in W_0^{1,p}(\Omega)$, the weak limit also belongs to
$W_0^{1,p}(\Omega)$; hence this constant is zero. Thus
$w_j\to0$ strongly in $L^p(\Omega)$.
On the other hand, uniform convergence of the volume densities gives
\[
1=\|w_j\|_{L^p(\Omega,g_j)}^p
\longrightarrow \|w\|_{L^p(\Omega,g)}^p=0,
\]
a contradiction.\end{proof}

The state equation also yields estimates that are uniform over bounded
families of admissible domains. Indeed, testing with $u_\Omega$ gives
\[
\|\nabla_g u_\Omega\|_{L^p(\Omega)}^p
=\int_\Omega fu_\Omega\,dV_g.
\]
By Hölder's inequality and Lemma~\ref{lem:uniform-poincare},
\[
\|\nabla_g u_\Omega\|_{L^p(\Omega)}^p
\leq
\|f\|_{L^{p'}(\Omega)}
\|u_\Omega\|_{L^p(\Omega)}
\leq
C_P\|f\|_{L^{p'}(M)}
\|\nabla_g u_\Omega\|_{L^p(\Omega)}.
\]
Hence
\[
\|\nabla_g u_\Omega\|_{L^p(\Omega)}
\leq
C\|f\|_{L^{p'}(M)}^{1/(p-1)},
\]
where the constant is uniform on a uniformly controlled admissible
family. In particular,
\[
T_{p,g}(\Omega)
\leq
C\|f\|_{L^{p'}(M)}^{p'}.
\]

There is also a useful monotonicity property. If
$\Omega_1\subset\Omega_2$ and both domains contain $K$, comparison gives
\[
0\leq u_{\Omega_1}\leq u_{\Omega_2}
\qquad\text{in }\Omega_1.
\]
Since $f$ is supported in $K\subset\Omega_1$,
\[
T_{p,g}(\Omega_1)
=\int_Kfu_{\Omega_1}\,dV_g
\leq
\int_Kfu_{\Omega_2}\,dV_g
=T_{p,g}(\Omega_2).
\]
Thus the $p$-torsion is monotone increasing under inclusion. This
property explains the competition in $\mathcal J_{p,g}$: enlarging a
domain improves the torsional contribution but increases perimeter and
volume.

The $RC$-$GNP$ assumptions provide both geometric compactness of the domains and the ambient diffeomorphisms needed to identify the varying Sobolev spaces. The uniform defining-function bounds and tubular radius prevent boundary degeneration and allow the normal graph construction used below. After pullback, the metrics $g_j=\Phi_j^*g$ converge to $g$ uniformly, which is the geometric input required by the monotonicity argument.

\subsection{A useful pullback identity}

Let $\Phi:M\to M$ be a $C^1$ diffeomorphism and let $g_\Phi=\Phi^*g$.
For a function $v$ on a domain $\Omega$, the change of variables
$y=\Phi(x)$ gives
\[
\int_{\Phi(\Omega)}|\nabla_gv|_g^p\,dV_g
=
\int_\Omega|\nabla_{g_\Phi}(v\circ\Phi)|_{g_\Phi}^p\,dV_{g_\Phi}.
\]
Similarly,
\[
\int_{\Phi(\Omega)}fv\,dV_g
=
\int_\Omega(f\circ\Phi)(v\circ\Phi)\,dV_{g_\Phi}.
\]
Thus the weak state equation on the moving domain is exactly equivalent
to a weak equation on the fixed domain with a varying metric and a
transported source. This identity is the geometric core of the
stability theorem proved below.


\subsection{Regularity}

The weak theory is based on $f\in L^{p'}$. For the pointwise boundary
condition and shape-differentiation arguments we impose stronger
assumptions. In particular, $f\in L^{p'}(M)$ is sufficient for weak
existence and stability, whereas the boundary calculations below are
made only under additional regularity assumptions ensuring
$u_\Omega\in C^{1,\beta}(\overline{\Omega})$ for some $\beta\in(0,1)$.
We therefore treat $u_\Omega\in C^{1,\beta}(\overline{\Omega})$ as an
explicit hypothesis whenever a pointwise normal derivative or a classical
shape derivative is used; it is not silently deduced from the weak-level
assumptions.

Since $u_\Omega\in C^{1,\beta}(\overline{\Omega})$ and
$u_\Omega=0$ on $\partial\Omega$, the tangential derivatives of
$u_\Omega$ vanish on $\partial\Omega$. Hence, at every
$x\in\partial\Omega$,
\[
\nabla_g u_\Omega(x)
=
(\partial_{\nu_\Omega}u_\Omega)(x)\,\nu_\Omega(x),
\]
and therefore
\[
|\nabla_g u_\Omega(x)|_g
=
|\partial_{\nu_\Omega}u_\Omega(x)|.
\]


\subsection{Well-posedness on a fixed domain}

Before studying domain perturbations, it is useful to isolate the
well-posedness mechanism for the state equation. Let $\Omega\Subset M$ be
an admissible domain and define
\[
\mathcal E_\Omega(v)
=
\frac1p\int_\Omega |\nabla_gv|_g^p\,dV_g
-\int_\Omega fv\,dV_g,
\qquad v\in W_0^{1,p}(\Omega).
\]
The Sobolev space $W_0^{1,p}(\Omega)$ is reflexive for
$1<p<\infty$, and Poincar\'e's inequality gives
\[
\|v\|_{L^p(\Omega)}
\leq C_P\|\nabla_gv\|_{L^p(\Omega)}.
\]
Consequently, by H\"older's inequality,
\[
\left|\int_\Omega fv\,dV_g\right|
\leq
C_P\|f\|_{L^{p'}(\Omega)}
\|\nabla_gv\|_{L^p(\Omega)}.
\]
It follows that $\mathcal E_\Omega$ is coercive:
\[
\mathcal E_\Omega(v)
\geq
\frac1p\|\nabla_gv\|_{L^p(\Omega)}^p
-
C\|f\|_{L^{p'}(\Omega)}
\|\nabla_gv\|_{L^p(\Omega)}.
\]
Since the first term is strictly convex, the direct method gives a
unique minimizer $u_\Omega$. Its Euler equation is
\[
\int_\Omega
|\nabla_gu_\Omega|_g^{p-2}
g(\nabla_gu_\Omega,\nabla_g\varphi)\,dV_g
=
\int_\Omega f\varphi\,dV_g
\]
for every $\varphi\in W_0^{1,p}(\Omega)$.

The strict monotonicity used throughout the paper follows from the
pointwise convexity of $\xi\mapsto|\xi|^p/p$. More precisely,
\[
\bigl(|\xi|^{p-2}\xi-|\eta|^{p-2}\eta\bigr)\cdot(\xi-\eta)>0
\]
whenever $\xi\neq\eta$. Applied fibrewise with the metric $g$, this
identity proves uniqueness directly. It also explains why the argument
extends without change from Euclidean gradients to Riemannian gradients:
at each point of $M$, the tangent space is a finite-dimensional inner
product space and the nonlinear constitutive law is the same radial map.

There is an additional useful consequence. If $f_1,f_2\in L^{p'}(M)$ and
$u_1,u_2$ are the corresponding states on the same domain, then testing
the difference of the two equations by $u_1-u_2$ yields
\[
\int_\Omega
\bigl(A(\nabla_gu_1)-A(\nabla_gu_2)\bigr)
\cdot_g(\nabla_gu_1-\nabla_gu_2)\,dV_g
=
\int_\Omega(f_1-f_2)(u_1-u_2)\,dV_g.
\]
For $p\ge2$ this gives the standard estimate
\[
\|\nabla_g(u_1-u_2)\|_{L^p(\Omega)}
\leq
C\|f_1-f_2\|_{L^{p'}(\Omega)}^{1/(p-1)},
\]
while for $1<p<2$ the corresponding weighted monotonicity estimate is
used. Thus the fixed-domain problem has exactly the stability structure
needed after a geometric pullback.

\subsection{Uniform equivalence after pullback}

The pullback construction in the stability theorem replaces a moving
domain by a fixed domain endowed with the metric $g_j=\Phi_j^*g$.
Because $\Phi_j\to\operatorname{Id}$ in $C^1(M)$, the metric coefficients
and their inverses converge uniformly. In particular, there exist
constants $0<c\le C<\infty$ independent of $j$ such that
\[
c\,g(\xi,\xi)
\leq g_j(\xi,\xi)
\leq C\,g(\xi,\xi)
\]
for every $\xi\in TM$. Hence
\[
c_1|\xi|_g\leq|\xi|_{g_j}\leq C_1|\xi|_g
\]
and
\[
c_2\,dV_g\leq dV_{g_j}\leq C_2\,dV_g.
\]
The constants depend only on a uniform $C^1$ bound for the
diffeomorphisms and their inverses.

As a consequence, all Sobolev norms induced by $g$ and $g_j$ on the fixed
domain $\Omega$ are uniformly equivalent:
\[
C^{-1}\|v\|_{W^{1,p}(\Omega,g)}
\leq
\|v\|_{W^{1,p}(\Omega,g_j)}
\leq
C\|v\|_{W^{1,p}(\Omega,g)}.
\]
The same observation applies to the duality pairing with
$L^{p'}(\Omega)$. This uniform equivalence is the precise reason why the
coercivity and Poincar\'e constants in the pulled-back problems can be
chosen independently of $j$.

The convergence $g_j\to g$ is stronger than mere equivalence. For fixed
$w,\varphi\in W_0^{1,p}(\Omega)$,
\[
B_j(w,\varphi)\longrightarrow B(w,\varphi),
\]
where
\[
B(w,\varphi)
=
\int_\Omega
|\nabla_gw|_g^{p-2}
g(\nabla_gw,\nabla_g\varphi)\,dV_g.
\]
Indeed, the integrands converge pointwise after approximation by smooth
functions, while the uniform metric bounds provide an integrable
majorant. This observation is the coefficient-convergence ingredient in
the Minty argument.

\section{Stability theorem}

\begin{lemma}[Continuity of composition under $C^1$ convergence]
\label{lem:composition-C1-W1p}
Let $(M,g)$ be a compact smooth Riemannian manifold and let $1<p<\infty$. Suppose that $\Phi_j:M\to M$ are $C^1$ diffeomorphisms such that
\[
\Phi_j\to\operatorname{Id}\quad\text{in }C^1(M).
\]
Then, for every $w\in W^{1,p}(M)$,
\[
w\circ\Phi_j\to w\quad\text{strongly in }W^{1,p}(M),
\]
and the same conclusion holds for $\Phi_j^{-1}$. Moreover, the composition
operators induced by $\Phi_j$ and $\Phi_j^{-1}$ are uniformly bounded on
$W^{1,p}(M)$. More generally, for every $1\le q<\infty$ and every
$h\in L^q(M)$,
\[
h\circ\Phi_j\to h\quad	ext{strongly in }L^q(M),
\]
and the same holds for $\Phi_j^{-1}$.
\end{lemma}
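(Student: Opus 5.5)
The plan is a density argument combined with uniform bounds on the composition operators. I will prove the uniform bounds first, then check convergence on smooth functions, and finally pass to general functions using density and the uniform bounds.

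\emph{Uniform bounds.} Since $\Phi_j\to\operatorname{Id}$ in $C^1(M)$, for $j$ large the differentials $d\Phi_j$ are uniformly close to the identity (measured in a fixed finite atlas, or through parallel transport along short geodesics). Hence the Jacobians $J\Phi_j$ satisfy $c\le J\Phi_j\le C$ uniformly. By the inverse function theorem, $d(\Phi_j^{-1})=(d\Phi_j)^{-1}\circ\Phi_j^{-1}$ is also uniformly bounded. Moreover $\Phi_j^{-1}\to\operatorname{Id}$ in $C^1$, because $d_g(\Phi_j^{-1}(y),y)=d_g(z,\Phi_j(z))$ with $z=\Phi_j^{-1}(y)$, and the differentials converge by the formula above. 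The change of variables $y=\Phi_j(x)$ then gives
\[
\int_M|h\circ\Phi_j|^q\,dV_g\le \sup|J\Phi_j^{-1}|\int_M|h|^q\,dV_g .
\]
For $w\in W^{1,p}$, the chain rule $\nabla_g(w\circ\Phi_j)=d\Phi_j^{T}(\nabla_g w\circ\Phi_j)$ holds a.e.; it is justified first for smooth $w$ and extended by density. This yields $\|w\circ\Phi_j\|_{W^{1,p}}\le C\|w\|_{W^{1,p}}$ with $C$ independent of $j$. The same argument applies to $\Phi_j^{-1}$.

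\emph{Convergence on a dense class.} For $h\in C^0(M)$, uniform continuity and $\Phi_j\to\operatorname{Id}$ uniformly give $h\circ\Phi_j\to h$ uniformly, hence in $L^q$. For $w\in C^1(M)$, the term $w\circ\Phi_j\to w$ converges uniformly in the same way. For the gradient, $d(w\circ\Phi_j)=dw_{\Phi_j(x)}\circ d\Phi_j(x)$. Here $dw\circ\Phi_j\to dw$ uniformly by continuity of $dw$, and $d\Phi_j\to\operatorname{Id}$ uniformly, so $\nabla_g(w\circ\Phi_j)\to\nabla_g w$ uniformly, hence in $L^p$. The comparison of covectors at different base points must be read in local charts (a finite atlas, with $j$ large enough that $\Phi_j$ moves points within Lebesgue-number distance); this bookkeeping is the one point requiring care, but it is routine.

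\emph{Density step.} Given $w\in W^{1,p}(M)$ and $\varepsilon>0$, choose $w_\varepsilon\in C^\infty(M)$ with $\|w-w_\varepsilon\|_{W^{1,p}}<\varepsilon$. Then
\[
\|w\circ\Phi_j-w\|\le\|(w-w_\varepsilon)\circ\Phi_j\|+\|w_\varepsilon\circ\Phi_j-w_\varepsilon\|+\|w_\varepsilon-w\|\le (C+1)\varepsilon+o(1),
\]
using the uniform bound from the first step. Hence $\limsup_j\|w\circ\Phi_j-w\|\le (C+1)\varepsilon$ for every $\varepsilon$, which proves the convergence. The $L^q$ statement follows identically, using density of $C^0(M)$ in $L^q$ for $q<\infty$ (this is where $q<\infty$ is needed). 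Everything applies verbatim to $\Phi_j^{-1}$, since we showed $\Phi_j^{-1}\to\operatorname{Id}$ in $C^1$.

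The main obstacle is the uniform-bound step. It needs the two-sided Jacobian control and the $C^1$ convergence of the inverses, since the density argument collapses without a $j$-independent operator bound.
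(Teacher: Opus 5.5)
Your proposal is correct and follows the same route as the paper. Both arguments get $j$-independent bounds on the composition operators from change of variables and the chain rule, prove convergence for smooth (resp.\ continuous) functions, and conclude with an $\varepsilon$-density argument; you additionally justify that $\Phi_j^{-1}\to\operatorname{Id}$ in $C^1(M)$, which the paper only asserts.
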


\begin{proof}
The convergence $\Phi_j\to\operatorname{Id}$ in $C^1(M)$ implies, for all
sufficiently large $j$, uniform bounds on $D\Phi_j$ and $D\Phi_j^{-1}$.
The change-of-variables formula and the chain rule therefore give
\[
\|w\circ\Phi_j\|_{W^{1,p}(M)}\le C\|w\|_{W^{1,p}(M)},
\]
with $C$ independent of $j$, and similarly for $\Phi_j^{-1}$.

If $w\in C^\infty(M)$, then $w\circ\Phi_j\to w$ in $C^1(M)$, hence
in $W^{1,p}(M)$. For general $w\in W^{1,p}(M)$, choose
$w_\varepsilon\in C^\infty(M)$ with
$\|w-w_\varepsilon\|_{W^{1,p}(M)}<\varepsilon$. Then
\begin{align*}
\|w\circ\Phi_j-w\|_{W^{1,p}(M)}
&\le (C+1)\varepsilon
+\|w_\varepsilon\circ\Phi_j-w_\varepsilon\|_{W^{1,p}(M)}.
\end{align*}
Taking first $j\to\infty$ and then $\varepsilon\to0$ proves the claim.
Since $\Phi_j^{-1}\to\operatorname{Id}$ in $C^1(M)$ as well, the same argument
applies to the inverses. The $L^q$ statement follows by the same density
argument, first for continuous functions and then for arbitrary
$L^q$ functions.
\end{proof}

\begin{theorem}[Stability of the Riemannian $p$-Laplace problem]
\label{thm:stability-p-laplace}
Let $\Omega_j,\Omega\in\mathcal A_C(M)$ and assume that
\[
\Omega_j\longrightarrow\Omega
\]
in the strong $RC$-$GNP$ topology. Let
$f\in L^{p'}(M)$, where $p'=p/(p-1)$, and let
$u_j=u_{\Omega_j}$ and $u=u_\Omega$ be the corresponding weak
solutions of the Riemannian $p$-Laplace Dirichlet problem, extended by
zero to $M$. Then
\[
u_j\longrightarrow u
\qquad\text{strongly in }W^{1,p}(M).
\]
In particular,
\[
\int_M|\nabla_g u_j|_g^p\,dV_g
\longrightarrow
\int_M|\nabla_g u|_g^p\,dV_g.
\]
\end{theorem}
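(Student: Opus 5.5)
We pull each moving problem back to the fixed domain $\Omega$ and then pass to the limit there. By the definition of strong $RC$-$GNP$ convergence there are $C^1$ diffeomorphisms $\Phi_j:M\to M$ with $\Phi_j(\Omega)=\Omega_j$ and $\Phi_j\to\operatorname{Id}$ in $C^1(M)$. Set $g_j=\Phi_j^*g$, $w_j=u_j\circ\Phi_j\in W_0^{1,p}(\Omega)$ and $f_j=f\circ\Phi_j$. By the pullback identity, $w_j$ is the unique minimizer on $W_0^{1,p}(\Omega)$ of
\[
\mathcal E_j(v)=\frac1p\int_\Omega|\nabla_{g_j}v|_{g_j}^p\,dV_{g_j}-\int_\Omega f_jv\,dV_{g_j}.
\]
The uniform energy estimate, which follows from Lemma~\ref{lem:uniform-poincare}, together with the uniform equivalence of $g_j$ and $g$, shows that $(w_j)$ is bounded in $W_0^{1,p}(\Omega)$. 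By Lemma~\ref{lem:composition-C1-W1p}, $f_j\to f$ in $L^{p'}$. After extraction, $w_j\rightharpoonup w$ weakly in $W_0^{1,p}(\Omega)$ and strongly in $L^p$.

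We identify $w=u$ through $\Gamma$-convergence of the energies, which avoids a Minty argument with varying coefficients.

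\emph{Liminf inequality.} Write $dV_{g_j}=\theta_j\,dV_g$ and $|\xi|_{g_j}^p=|G_j\xi|_g^p$, where $\theta_j\to1$ and $G_j\to\operatorname{Id}$ uniformly. Then
\[
\int|\nabla_{g_j}w_j|^p_{g_j}\,dV_{g_j}\ \ge\ (1-o(1))\int|\nabla_g w_j|_g^p\,dV_g,
\]
so weak lower semicontinuity of the convex $L^p$ norm gives $\liminf_j\mathcal E_j(w_j)\ge\mathcal E(w)$. The linear term converges because $f_j\theta_j\to f$ strongly in $L^{p'}$ while $w_j\rightharpoonup w$.

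\emph{Recovery sequence.} For a fixed $v\in W_0^{1,p}(\Omega)$, uniform convergence of the coefficients gives $\mathcal E_j(v)\to\mathcal E(v)$.

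Combining the two with minimality of $w_j$,
\[
\mathcal E(w)\le\liminf_j\mathcal E_j(w_j)\le\limsup_j\mathcal E_j(w_j)\le\lim_j\mathcal E_j(v)=\mathcal E(v)
\]
for every $v$. Hence $w$ minimizes $\mathcal E$, so $w=u$ by uniqueness, and the whole sequence converges. Taking $v=u$ above also gives $\mathcal E_j(w_j)\to\mathcal E(u)$.

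Strong convergence is the step I expect to be the main obstacle. Since the linear terms converge, $\mathcal E_j(w_j)\to\mathcal E(u)$ yields $\int|\nabla_{g_j}w_j|^p_{g_j}dV_{g_j}\to\int|\nabla_gu|^p_gdV_g$. The pointwise bounds $|\xi|_{g_j}^p\theta_j=(1+o(1))|\xi|_g^p$, uniform in $\xi$, combined with the $L^p$ bound on $\nabla w_j$, then give $\|\nabla_g w_j\|_{L^p(\Omega,g)}\to\|\nabla_g u\|_{L^p(\Omega,g)}$. Since $L^p(\Omega;TM)$ is uniformly convex for $1<p<\infty$, weak convergence together with convergence of norms implies $w_j\to u$ strongly in $W^{1,p}(\Omega)$, and therefore in $W^{1,p}(M)$ after extension by zero. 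This route handles $1<p<2$ and $p\ge2$ uniformly, with no separate monotonicity estimates.

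Finally we transfer back to the moving domains. Extending by zero, $u_j=w_j\circ\Phi_j^{-1}$ on $M$, and
\[
u_j-u=(w_j-u)\circ\Phi_j^{-1}+\bigl(u\circ\Phi_j^{-1}-u\bigr).
\]
The first term tends to zero by the uniform boundedness of composition operators in Lemma~\ref{lem:composition-C1-W1p}. The second tends to zero by the continuity statement of the same lemma applied to $u\in W^{1,p}(M)$. This gives $u_j\to u$ in $W^{1,p}(M)$, and the convergence of the energies $\int_M|\nabla_g u_j|_g^p\,dV_g$ follows immediately.
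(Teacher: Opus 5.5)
Your argument is correct, but it follows a different route from the paper.

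\textbf{What the paper does.} It also pulls back to $\Omega$ with $g_j=\Phi_j^*g$ and extracts a weak limit. It then identifies the limit by a Minty argument. From $0\le B_j(v_j,v_j-w)-B_j(w,v_j-w)$ it passes to the limiting variational inequality, and takes $w=v\pm t\varphi$ to recover the weak equation. Strong convergence comes from showing $B_j(v_j,v_j-u)-B_j(u,v_j-u)\to0$ and applying the quantitative monotonicity inequalities for $|\xi|^{p-2}\xi$. This forces a split into $p\ge2$ (power-type monotonicity) and $1<p<2$ (weighted inequality plus H\"older).

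\textbf{What you do.} You use the fact that $w_j$ minimizes a strictly convex energy. The liminf inequality and the constant recovery sequence identify the weak limit as the minimizer of $\mathcal E$. The liminf step rests on weak lower semicontinuity plus the uniform bound $(1-\varepsilon_j)|dv|_g^p\le|dv|_{g_j}^p\theta_j\le(1+\varepsilon_j)|dv|_g^p$. The same argument gives convergence of the minimal energies, hence of $\|\nabla_g w_j\|_{L^p(\Omega,g)}$. The Radon--Riesz property of the uniformly convex space $L^p(\Omega;TM)$ then upgrades weak to strong convergence for all $1<p<\infty$ at once.

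\textbf{Comparison.} Your route is shorter and avoids the case distinction and the weighted estimate. It also yields convergence of the energies directly. Once $w=u$ is known, the energy identity $\int_\Omega|\nabla_{g_j}w_j|^p_{g_j}dV_{g_j}=\int_\Omega f_jw_j\,dV_{g_j}$ gives the norm convergence even more quickly. The cost is that you rely on the potential structure of the operator. You also need uniform convexity of $L^p$ sections with a fibre metric, which is standard via Clarkson-type inequalities for Euclidean-valued $L^p$ but should be stated. The paper's monotone-operator argument is the tool that survives for non-variational monotone fluxes, and it ties the gradient error to the monotonicity gap. The final transfer back to $M$ via Lemma~\ref{lem:composition-C1-W1p} is identical in both.

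\textbf{Minor point.} The comparison $|\xi|_{g_j}=|G_j\xi|_g$ should be phrased for the differential $dv$, since $\nabla_{g_j}v\neq\nabla_gv$. The uniform comparability you actually use does hold for the dual metrics, so nothing breaks.
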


\begin{proof}
Let $\Phi_j:M\to M$ be the diffeomorphisms associated with the strong
$RC$-$GNP$ convergence, so that
\[
\Phi_j(\Omega)=\Omega_j,
\qquad
\Phi_j\to\operatorname{Id}
\quad\text{in }C^1(M).
\]
Set
\[
g_j:=\Phi_j^*g.
\]
Then
\[
g_j\to g\qquad\text{uniformly on }M,
\]
and, since $\Phi_j\to\operatorname{Id}$ in $C^1$ and the $\Phi_j$ are
diffeomorphisms, the metrics $g_j$ are uniformly equivalent to $g$.
In particular, there exists $c\geq1$, independent of $j$, such that
\[
c^{-1}|\xi|_g\leq |\xi|_{g_j}\leq c|\xi|_g
\]
for every $\xi\in TM$.

Define
\[
v_j=u_j\circ\Phi_j\qquad\text{in }\Omega.
\]
Then
\[
v_j\in W_0^{1,p}(\Omega).
\]
After the change of variables $y=\Phi_j(x)$, the weak formulation on
$\Omega_j$ becomes
\begin{equation}
\int_\Omega
|\nabla_{g_j}v_j|_{g_j}^{p-2}
g_j(\nabla_{g_j}v_j,\nabla_{g_j}\varphi)
\,dV_{g_j}
=
\int_\Omega (f\circ\Phi_j)\varphi\,dV_{g_j}
\label{eq:transported-p-laplace}
\end{equation}
for every $\varphi\in W_0^{1,p}(\Omega)$.

For later use, define the nonlinear form
\[
B_j(w,\varphi)
=
\int_\Omega
|\nabla_{g_j}w|_{g_j}^{p-2}
g_j(\nabla_{g_j}w,\nabla_{g_j}\varphi)
\,dV_{g_j}
\]
and
\[
F_j(\varphi)
=
\int_\Omega(f\circ\Phi_j)\varphi\,dV_{g_j}.
\]
The uniform equivalence of $g_j$ and $g$, together with the uniform
Poincar\'e inequality on the fixed domain $\Omega$, gives
\[
\|v_j\|_{W^{1,p}(\Omega)}\leq C.
\]
Indeed, taking $\varphi=v_j$ in \eqref{eq:transported-p-laplace},
using H\"older's inequality and Poincar\'e's inequality, yields the
bound with $C$ independent of $j$.

Since $W_0^{1,p}(\Omega)$ is reflexive, after extraction of a
subsequence there exists $v\in W_0^{1,p}(\Omega)$ such that
\[
v_j\rightharpoonup v
\qquad\text{weakly in }W_0^{1,p}(\Omega).
\]
Moreover, by the Rellich--Kondrachov theorem,
\[
v_j\longrightarrow v
\qquad\text{strongly in }L^p(\Omega).
\]

We next identify the weak limit. The passage to the limit in the nonlinear term cannot be justified by weak convergence alone. We therefore use the monotonicity of the transported operators.

For an arbitrary $w{ \in }W_0^{1,p}(\Omega)$, monotonicity gives
\[
0\leq B_j(v_j,v_j-w)-B_j(w,v_j-w).
\]
The first term equals $F_j(v_j-w)$ by the transported weak formulation.
By the $L^{p'}$ version of Lemma~\ref{lem:composition-C1-W1p},
\[
f\circ\Phi_j\longrightarrow f
\qquad\text{strongly in }L^{p'}(M).
\]
Together with $v_j\to v$ strongly in $L^p(\Omega)$ and the uniform
convergence of the volume densities $dV_{g_j}$ to $dV_g$, this gives
\[
F_j(v_j-w)\longrightarrow \int_\Omega f(v-w)\,dV_g.
\]
For the second term, $w$ is fixed and the coefficients of the transported
operator converge uniformly. Hence the weak convergence of $v_j$ gives
\[
B_j(w,v_j-w)\longrightarrow
\int_\Omega
|\nabla_g w|_g^{p-2}
 g(\nabla_g w,\nabla_g(v-w))\,dV_g.
\]
It follows that, for every $w\in W_0^{1,p}(\Omega)$,
\[
\int_\Omega
|\nabla_g w|_g^{p-2}
 g(\nabla_g w,\nabla_g(w-v))\,dV_g
\geq
\int_\Omega f(w-v)\,dV_g.
\]
This is the Minty variational inequality for the limiting operator. Taking
$w=v-t\varphi$, with $t>0$ and arbitrary $\varphi\in W_0^{1,p}(\Omega)$,
and then dividing by $t$, we obtain, as $t\downarrow0$,
\[
-\int_\Omega
|\nabla_g(v-t\varphi)|_g^{p-2}
 g(\nabla_g(v-t\varphi),\nabla_g\varphi)\,dV_g
\geq
-\int_\Omega f\varphi\,dV_g.
\]
Using the continuity of the map
$\xi\mapsto|\xi|_g^{p-2}\xi$ from $L^p$ into $L^{p'}$, we obtain
\[
\int_\Omega
|\nabla_g v|_g^{p-2}
 g(\nabla_g v,\nabla_g\varphi)\,dV_g
\leq
\int_\Omega f\varphi\,dV_g.
\]
Repeating the argument with $w=v+t\varphi$ yields the reverse inequality.
Therefore
\[
\int_\Omega
|\nabla_g v|_g^{p-2}
 g(\nabla_g v,\nabla_g\varphi)\,dV_g
=
\int_\Omega f\varphi\,dV_g
\]
for every $\varphi\in W_0^{1,p}(\Omega)$. Thus $v$ is a weak solution of
the limiting Dirichlet problem. By uniqueness,
\[
v=u.
\]
Consequently,
\[
v_j\rightharpoonup u
\qquad\text{weakly in }W_0^{1,p}(\Omega).
\]

It remains to prove strong convergence. Taking $\varphi=v_j-u$ in
\eqref{eq:transported-p-laplace} gives
\[
B_j(v_j,v_j-u)=F_j(v_j-u)\longrightarrow0.
\]
Moreover, since $u$ is fixed and $g_j\to g$ uniformly,
\[
B_j(u,v_j-u)\longrightarrow0
\]
by the weak convergence of $v_j$ to $u$. Hence
\begin{equation}
B_j(v_j,v_j-u)-B_j(u,v_j-u)\longrightarrow0.
\label{eq:monotonicity-limit}
\end{equation}

For $p\ge2$, the uniform monotonicity inequality gives, for some
$c_p>0$ independent of $j$,
\[
\left(
|\xi|_{g_j}^{p-2}\xi-|\eta|_{g_j}^{p-2}\eta
\right)\cdot_{g_j}(\xi-\eta)
\ge c_p|\xi-\eta|_{g_j}^p.
\]
Using \eqref{eq:monotonicity-limit}, we obtain
\[
\|\nabla_{g_j}v_j-\nabla_{g_j}u\|_{L^p(\Omega,g_j)}\longrightarrow0.
\]

For $1<p<2$, we use the standard weighted monotonicity estimate.
The quotient below is understood to be $0$ when $\xi=\eta=0$.
For $1<p<2$, we use the uniform estimate
\[
\left(
|\xi|_{g_j}^{p-2}\xi-|\eta|_{g_j}^{p-2}\eta
\right)\cdot_{g_j}(\xi-\eta)
\ge c_p
\frac{|\xi-\eta|_{g_j}^2}
{(|\xi|_{g_j}+|\eta|_{g_j})^{2-p}}.
\]
It follows from \eqref{eq:monotonicity-limit} that
\[
\int_\Omega
\frac{|\nabla_{g_j}v_j-\nabla_{g_j}u|_{g_j}^2}
{(|\nabla_{g_j}v_j|_{g_j}+|\nabla_{g_j}u|_{g_j})^{2-p}}
\,dV_{g_j}\longrightarrow0.
\]
By H\"older's inequality,
\begin{align*}
\int_\Omega
|\nabla_{g_j}v_j-\nabla_{g_j}u|_{g_j}^p\,dV_{g_j}
&\leq
\left(
\int_\Omega
\frac{|\nabla_{g_j}v_j-\nabla_{g_j}u|_{g_j}^2}
{(|\nabla_{g_j}v_j|_{g_j}+|\nabla_{g_j}u|_{g_j})^{2-p}}
\,dV_{g_j}
\right)^{p/2}
\\
&\quad\times
\left(
\int_\Omega
(|\nabla_{g_j}v_j|_{g_j}+|\nabla_{g_j}u|_{g_j})^p
\,dV_{g_j}
\right)^{(2-p)/2}.
\end{align*}
The second factor is uniformly bounded, so the first one implies
\[
\|\nabla_{g_j}v_j-\nabla_{g_j}u\|_{L^p(\Omega,g_j)}\longrightarrow0.
\]
Thus, for every $1<p<\infty$,
\[
v_j\longrightarrow u
\qquad\text{strongly in }W_0^{1,p}(\Omega).
\]

Let $\widetilde v_j$ and $\widetilde u$ denote the zero extensions of
$v_j$ and $u$ from $\Omega$ to $M$. Since
$v_j\to u$ strongly in $W_0^{1,p}(\Omega)$, we have
\[
\widetilde v_j\to\widetilde u\quad\text{strongly in }W^{1,p}(M).
\]
By construction, $\widetilde v_j=\widetilde u_j\circ\Phi_j$ on $M$,
where $\widetilde u_j$ is the zero extension of $u_j$. Hence
$\widetilde u_j=\widetilde v_j\circ\Phi_j^{-1}$. Therefore, by
Lemma~\ref{lem:composition-C1-W1p},
\begin{align*}
\|\widetilde u_j-\widetilde u\|_{W^{1,p}(M)}
&\le C\|\widetilde v_j-\widetilde u\|_{W^{1,p}(M)}
+\|\widetilde u\circ\Phi_j^{-1}-\widetilde u\|_{W^{1,p}(M)}
\longrightarrow0.
\end{align*}
Thus $u_j\to u$ strongly in $W^{1,p}(M)$.
In particular,
\[
\|\nabla_g u_j\|_{L^p(M)}
\longrightarrow
\|\nabla_g u\|_{L^p(M)},
\]
and hence
\[
\int_M|\nabla_g u_j|_g^p\,dV_g
\longrightarrow
\int_M|\nabla_g u|_g^p\,dV_g.
\]

\end{proof}


\subsection{Interpretation of the stability mechanism}

The stability theorem can be separated into three logically distinct
steps. The first is geometric: strong $RC$-$GNP$ convergence supplies
diffeomorphisms $\Phi_j$ that identify the varying domains with one fixed
domain. The second is functional analytic: the pulled-back states are
uniformly bounded in the reflexive space $W_0^{1,p}(\Omega)$ and therefore
admit weakly convergent subsequences. The third is nonlinear: strict
monotonicity and the Minty argument identify the weak limit with the
unique solution of the limiting equation, after which quantitative
monotonicity yields strong convergence.

This separation is useful because the first two steps do not depend on
the distinction between the cases $p<2$ and $p\ge2$. The distinction
appears only in the final upgrade from weak to strong convergence. For
$p\ge2$, the map $\xi\mapsto|\xi|^{p-2}\xi$ is uniformly monotone of
power type $p$. For $1<p<2$, the map is singular at $\xi=0$, and the
appropriate estimate contains the weight
\[
\bigl(|\xi|+|\eta|\bigr)^{2-p}.
\]
The uniform energy bound controls the weighted factor and allows
H\"older's inequality to recover convergence in $L^p$.

The strong convergence has two immediate consequences that are important
for optimization. First,
\[
\int_\Omega|\nabla_{g_j}v_j|_{g_j}^p\,dV_{g_j}
\longrightarrow
\int_\Omega|\nabla_gu|_g^p\,dV_g.
\]
Second, because the zero extensions are related by the ambient
diffeomorphisms, the corresponding states converge strongly on the
original manifold. Thus the analytic state functional does not lose
mass in the limit even though the domains themselves vary.

\subsection{Energy identity and dual characterization of the torsion}

Testing the state equation by $u_\Omega$ gives
\[
T_{p,g}(\Omega)
=
\int_\Omega|\nabla_gu_\Omega|_g^p\,dV_g
=
\int_\Omega fu_\Omega\,dV_g.
\]
Consequently,
\[
\mathcal E_\Omega(u_\Omega)
=
-\frac1{p'}T_{p,g}(\Omega).
\]
Equivalently, the torsion admits the variational characterization
\[
T_{p,g}(\Omega)
=
p'\sup_{v\in W_0^{1,p}(\Omega)}
\left\{
\int_\Omega fv\,dV_g
-\frac1p\int_\Omega|\nabla_gv|_g^p\,dV_g
\right\}.
\]
Indeed, the expression in braces is maximized precisely at
$u_\Omega$. This characterization is useful because it makes the
competition between the nonlinear state energy and the geometric
penalties transparent.

The scaling of the nonlinear energy also explains the occurrence of the
conjugate exponent $p'$. If the source is multiplied by a positive
constant $a$, then the state is multiplied by $a^{1/(p-1)}$:
\[
u_\Omega[af]
=
a^{1/(p-1)}u_\Omega[f].
\]
Hence
\[
T_{p,g}(\Omega;af)
=
a^{p'}T_{p,g}(\Omega;f).
\]
This scaling is consistent with the exponent appearing in the explicit
radial formulas on the sphere and provides a useful check on all
dimension-dependent calculations.

\section{Consequence for the $p$-torsion}

\begin{definition}[$p$-torsion]
\label{def:p-torsion}
Let $\Omega\in\mathcal A_C(M)$, and let $u_\Omega\in W_0^{1,p}(\Omega)$
be the unique weak solution of
\[
-\Delta_{g,p}u_\Omega=f\quad\text{in }\Omega,
\qquad
u_\Omega=0\quad\text{on }\partial\Omega.
\]
The \emph{$p$-torsion} of $\Omega$ associated with the fixed source $f$ is
\[
\boxed{
T_{p,g}(\Omega):=\int_\Omega|\nabla_g u_\Omega|_g^p\,dV_g.
}
\]
Testing the weak equation with $u_\Omega$ gives the equivalent identity
\[
T_{p,g}(\Omega)=\int_\Omega f u_\Omega\,dV_g.
\]
Hence $T_{p,g}(\Omega)$ is a well-defined finite domain functional under
the standing assumption $f\in L^{p'}(M)$.
\end{definition}

\begin{corollary}[Continuity of the $p$-torsion]
\label{cor:continuity-p-torsion}
Let $\Omega_j,\Omega\in\mathcal A_C(M)$ and assume that
$\Omega_j\to\Omega$ in the strong $RC$-$GNP$ topology. Let $u_j$ and
$u$ denote the corresponding weak solutions. Then
\[
T_{p,g}(\Omega_j)\longrightarrow T_{p,g}(\Omega).
\]
\end{corollary}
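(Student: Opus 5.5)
The corollary follows essentially immediately from Theorem~\ref{thm:stability-p-laplace}, so the plan is to reduce it to that theorem and check that the definition of the torsion is compatible with the zero extensions used there.

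First, we extend $u_j$ and $u$ by zero to $M$. Since $u_j\in W_0^{1,p}(\Omega_j)$, the zero extension lies in $W^{1,p}(M)$ with gradient equal to $\nabla_g u_j$ on $\Omega_j$ and zero elsewhere. Hence
\[
T_{p,g}(\Omega_j)=\int_{\Omega_j}|\nabla_g u_j|_g^p\,dV_g=\int_M|\nabla_g \widetilde u_j|_g^p\,dV_g,
\]
and likewise for $\Omega$. Definition~\ref{def:p-torsion} is therefore expressed entirely through the zero-extended states on the fixed manifold $M$.

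Second, we invoke Theorem~\ref{thm:stability-p-laplace}. Under strong $RC$-$GNP$ convergence it gives $\widetilde u_j\to\widetilde u$ strongly in $W^{1,p}(M)$, so $\nabla_g\widetilde u_j\to\nabla_g\widetilde u$ in $L^p(M;TM)$. The reverse triangle inequality in $L^p$ then gives convergence of the norms, and raising to the power $p$ gives $T_{p,g}(\Omega_j)\to T_{p,g}(\Omega)$. This is exactly the last display of that theorem. One point needs checking: the theorem was proved for the whole sequence rather than a subsequence. In its proof, the weak limit was identified uniquely as $u$, and the strong convergence step used the full sequence. If there were any doubt, a subsequence-of-subsequence argument settles it, since every subsequence has a further subsequence whose torsions converge to the same value $T_{p,g}(\Omega)$.

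As an independent cross-check, we can use the identity $T_{p,g}=\int fu\,dV_g$. Strong $L^p$ convergence of $\widetilde u_j$, together with $f\in L^{p'}(M)$ supported in $K\subset C\subset\Omega_j$, gives $\int_M f\widetilde u_j\,dV_g\to\int_M f\widetilde u\,dV_g$ by Hölder's inequality. I do not expect a genuine obstacle. The only delicate point is the bookkeeping of the zero extensions, which must be correctly related through $\Phi_j$; this was already handled at the end of the stability proof via Lemma~\ref{lem:composition-C1-W1p}.
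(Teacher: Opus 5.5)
Your proposal is correct and is essentially the paper's proof: extend the states by zero, invoke the strong $W^{1,p}(M)$ convergence from Theorem~\ref{thm:stability-p-laplace}, and identify $\int_M|\nabla_g\widetilde u_j|_g^p\,dV_g$ with $T_{p,g}(\Omega_j)$. Your subsequence-of-subsequences remark is the right safeguard, since the stability proof actually extracts a subsequence before identifying the limit as $u$ and never spells out the return to the full sequence. Your cross-check via $\int_M f\widetilde u_j\,dV_g$ also shows that strong $L^p$ convergence of the states would already be enough.
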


\begin{proof}
By Theorem~\ref{thm:stability-p-laplace}, the zero extensions of the
states satisfy
\[
u_j\longrightarrow u\qquad\text{strongly in }W^{1,p}(M).
\]
In particular,
\[
\int_M|\nabla_g u_j|_g^p\,dV_g
\longrightarrow
\int_M|\nabla_g u|_g^p\,dV_g.
\]
Because the zero extensions vanish outside their respective domains,
\[
\int_M|\nabla_g u_j|_g^p\,dV_g=T_{p,g}(\Omega_j),
\qquad
\int_M|\nabla_g u|_g^p\,dV_g=T_{p,g}(\Omega).
\]
Therefore $T_{p,g}(\Omega_j)\to T_{p,g}(\Omega)$.
\end{proof}

We consider
\[
\boxed{
\mathcal J_{p,g}(\Omega)
=
-T_{p,g}(\Omega)
+\sigma P_g(\Omega)
+k^pV_g(\Omega).
}
\]

\subsection{Continuity of the geometric quantities}

We record two elementary consequences of strong $RC$-$GNP$ convergence.
If $\Omega_j\to\Omega$ and the associated diffeomorphisms satisfy
$\Phi_j\to\operatorname{Id}$ in $C^1$, then the volume densities of
$g_j=\Phi_j^*g$ converge uniformly to those of $g$. Consequently,
\[
V_g(\Omega_j)
=V_{g_j}(\Omega)
\longrightarrow V_g(\Omega).
\]
The same conclusion follows directly from the $L^1$ convergence of
characteristic functions established in the compactness theorem.

For the boundary geometry, the $C^2$ convergence of the defining
functions implies convergence of the tangent spaces, unit normals and
second fundamental forms on corresponding boundary points. In
particular, for the normal graph representation
\[
\partial\Omega_j
=
\{\exp_x(\eta_j(x)\nu_\Omega(x)):x\in\partial\Omega\},
\]
we have $\eta_j\to0$ in $C^2(\partial\Omega)$. We make the resulting
perimeter convergence explicit. Define
\[
\Psi_j:\partial\Omega\longrightarrow\partial\Omega_j,
\qquad
\Psi_j(x)=\exp_x\bigl(\eta_j(x)\nu_\Omega(x)\bigr).
\]
Since $\eta_j\to0$ in $C^2(\partial\Omega)$ and the ambient metric is
smooth, the pullback of the induced metric satisfies
\[
\Psi_j^*\bigl(g|_{\partial\Omega_j}\bigr)
\longrightarrow g|_{\partial\Omega}
\qquad\text{uniformly on }\partial\Omega.
\]
Equivalently, the corresponding tangential Jacobians satisfy
\[
J_{\partial\Omega}\Psi_j\longrightarrow1
\qquad\text{uniformly on }\partial\Omega.
\]
Therefore the change-of-variables formula on the hypersurfaces gives
\[
P_g(\Omega_j)
=
\int_{\partial\Omega_j}1\,dS_g
=
\int_{\partial\Omega}J_{\partial\Omega}\Psi_j\,dS_g
\longrightarrow
\int_{\partial\Omega}1\,dS_g
=P_g(\Omega).
\]
Thus, under strong $RC$-$GNP$ convergence,
\[
P_g(\Omega_j)\longrightarrow P_g(\Omega).
\]
The same normal-graph representation also yields uniform control of the
mean curvatures, which is the geometric input used in the local contact
analysis.

\subsection{Why the compactness and stability hypotheses are matched}

The choice of the $RC$-$GNP$ topology is not merely technical. The
existence proof requires a convergent subsequence of admissible domains,
whereas the nonlinear state equation requires a common functional space
on which weak convergence can be tested. If one only knew that
$\chi_{\Omega_j}\to\chi_\Omega$ in $L^1$, the functions
$u_{\Omega_j}$ would still live on different spaces
$W_0^{1,p}(\Omega_j)$. The maps $\Phi_j$ solve this identification
problem.

After pullback, all states belong to $W_0^{1,p}(\Omega)$ and solve
problems associated with metrics $g_j$ converging uniformly to $g$.
The weak compactness of $W_0^{1,p}(\Omega)$ then produces a candidate
limit, and strict monotonicity identifies it uniquely. Finally,
quantitative monotonicity upgrades weak convergence to strong
convergence. 

\subsection{The case $p=2$ and the nonlinear extension}

When $p=2$, the operator reduces to the Laplace--Beltrami operator,
\[
\Delta_{g,2}u=\Delta_gu,
\]
and the torsion becomes the classical Riemannian torsional functional.
The state equation is then linear and the monotonicity argument reduces
to the usual coercivity of the Dirichlet form. The present paper retains
this classical structure but replaces linear elliptic theory by the
strictly convex $p$-energy for arbitrary $1<p<\infty$.

The nonlinear extension is not purely formal. In particular, the
boundary integrand in the Hadamard formula is
$|\nabla_g u|_g^p$, the natural stress tensor is nonlinear, and the
strong stability argument for $1<p<2$ requires a weighted monotonicity
estimate rather than the elementary Hilbert-space identity available
when $p=2$. These are the points where the $p$-Laplacian genuinely
changes the analysis.
All nonlinear formulas below reduce to the corresponding
Laplace--Beltrami formulas when $p=2$; this is an essential consistency
check on the exponents and signs.


\section{Main results}

The main results are formulated for the admissible class $\mathcal A_C(M)$ above.

\subsection{Existence of an optimal domain}

\begin{theorem}[Existence of a minimizer]
\label{thm:existence-minimizer}
Assume the hypotheses defining $\mathcal A_C(M)$, and let
$1<p<\infty$ and $f\in L^{p'}(M)$, $f\geq0$. Then the functional
$\mathcal J_{p,g}$ admits a minimizer
\[
\Omega^\ast\in \mathcal A_C(M):
\qquad
\mathcal J_{p,g}(\Omega^\ast)
=
\inf_{\Omega\in \mathcal A_C(M)}\mathcal J_{p,g}(\Omega).
\]
\end{theorem}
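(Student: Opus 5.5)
The proof is the direct method of the calculus of variations in the strong $RC$-$GNP$ topology, combining Theorem~\ref{thm:compactness-RC-GNP} with Corollary~\ref{cor:continuity-p-torsion} and the continuity of $P_g$ and $V_g$ recorded above.

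\textbf{Step 1: the infimum is finite.} Set $m:=\inf_{\Omega\in\mathcal A_C(M)}\mathcal J_{p,g}(\Omega)$. The class is nonempty; I assume it contains at least one domain, for instance a small normal push-out of $C$, which is admissible by the tubular hypothesis on $\partial C$. Hence $m<+\infty$. For the lower bound, the uniform energy estimate following Lemma~\ref{lem:uniform-poincare} gives
\[
T_{p,g}(\Omega)\le C\|f\|_{L^{p'}(M)}^{p'}
\]
with $C$ uniform on $\mathcal A_C(M)$. Since $P_g\ge0$ and $V_g\ge0$, we get $\mathcal J_{p,g}\ge -C\|f\|_{L^{p'}}^{p'}$, so $m>-\infty$.

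\textbf{Step 2: extract a limit domain.} Take a minimizing sequence $(\Omega_j)\subset\mathcal A_C(M)$ with $\mathcal J_{p,g}(\Omega_j)\to m$. By Theorem~\ref{thm:compactness-RC-GNP}, after extraction there is $\Omega^\ast\in\mathcal A_C(M)$ with $\Omega_j\to\Omega^\ast$ in the strong $RC$-$GNP$ topology. The realizing diffeomorphisms $\Phi_j\to\operatorname{Id}$ in $C^1$ and the normal-graph functions $\eta_j\to0$ in $C^2(\partial\Omega^\ast)$ are also available.

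\textbf{Step 3: pass to the limit in each term.} Corollary~\ref{cor:continuity-p-torsion} gives $T_{p,g}(\Omega_j)\to T_{p,g}(\Omega^\ast)$. The $L^1$ convergence $\chi_{\Omega_j}\to\chi_{\Omega^\ast}$ gives $V_g(\Omega_j)\to V_g(\Omega^\ast)$. The tangential Jacobian argument gives $P_g(\Omega_j)\to P_g(\Omega^\ast)$. Therefore $\mathcal J_{p,g}(\Omega_j)\to\mathcal J_{p,g}(\Omega^\ast)$, so $\mathcal J_{p,g}(\Omega^\ast)=m$. Because the functional is actually continuous along the sequence, lower semicontinuity is not even needed.

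\textbf{Main obstacle.} All the work is carried by the compactness theorem, specifically by the requirement that the limit stays in $\mathcal A_C(M)$ with the same constants $A,c_0,\rho_0,\rho_1$. The defining-function bounds pass to the limit, as shown in that proof. The uniform nondegeneracy of the normal exponential map on $\partial\Omega^\ast\times(-\rho_1,\rho_1)$ needs a separate argument. I would first check this by noting that injectivity and nondegeneracy of $\mathcal E_{\Omega_j}$ on the fixed radius pass to the $C^1$ limit of the maps. If needed, I would then shrink to any $\rho<\rho_1$, using that closed conditions persist. I take this as included in Theorem~\ref{thm:compactness-RC-GNP}, as stated.
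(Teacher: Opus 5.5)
Your proof is correct and follows the paper's own argument essentially step for step: the uniform bound $T_{p,g}(\Omega)\le C\|f\|_{L^{p'}(M)}^{p'}$ makes the infimum finite, Theorem~\ref{thm:compactness-RC-GNP} extracts a limit $\Omega^\ast\in\mathcal A_C(M)$, and the torsion, perimeter and volume terms all converge along the sequence, so no lower semicontinuity is needed. One caution on your closing remark: the fallback of shrinking to some $\rho<\rho_1$ would not be legitimate, because a limit admitting only a smaller tube need not lie in $\mathcal A_C(M)$ and so would not be an admissible minimizer; fortunately it is unnecessary, and in any case you, like the paper, take this point from the compactness theorem as stated. The reason it is unnecessary is that each $\Omega_j$ satisfies $d_g(\exp_x(t\nu_{\Omega_j}(x)),\partial\Omega_j)=|t|$ for $|t|<\rho_1$, this identity passes to the $C^2$ limit, and it rules out both self-intersections and focal points of the normal exponential map of $\partial\Omega^\ast$ on the open set $\partial\Omega^\ast\times(-\rho_1,\rho_1)$.
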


\begin{proof}
Since the energy estimate gives a uniform upper bound for $T_{p,g}(\Omega)$ on $\mathcal A_C(M)$, while $\sigma>0$ and $k>0$, we have
\[
\mathcal J_{p,g}(\Omega)
\geq
-C\|f\|_{L^{p'}(M)}^{p'}
\qquad\text{for all }\Omega\in\mathcal A_C(M).
\]
Hence the infimum is finite and there exists a minimizing sequence $(\Omega_j)\subset\mathcal A_C(M)$.

By Theorem~\ref{thm:compactness-RC-GNP}, after extraction we have $\Omega_j\to\Omega^*$ in the strong $RC$-$GNP$ topology for some $\Omega^*\in\mathcal A_C(M)$. The stability theorem gives $T_{p,g}(\Omega_j)\to T_{p,g}(\Omega^*)$. The $C^2$ convergence of the boundaries and the uniform tubular representation imply $P_g(\Omega_j)\to P_g(\Omega^*)$, while $L^1$ convergence of the characteristic functions gives $V_g(\Omega_j)\to V_g(\Omega^*)$. Hence $\mathcal J_{p,g}(\Omega_j)\to\mathcal J_{p,g}(\Omega^*)$, and $\Omega^*$ attains the infimum.\end{proof}

\subsection{Comparison with the reference domain}

\begin{proposition}[Comparison principle]
\label{prop:comparison}
Let $\Omega\in \mathcal A_C(M)$ and let $u_C$ and $u_\Omega$ solve the
$p$-Laplace Dirichlet problems on $C$ and $\Omega$, respectively.
Then
\[
0\leq u_C\leq u_\Omega
\qquad\text{in }C.
\]
\end{proposition}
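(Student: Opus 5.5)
The plan is to prove both inequalities by the weak comparison principle, using test functions built from positive parts and the strict monotonicity of $A_x(\xi)=|\xi|_g^{p-2}\xi$ recorded in Section~2. Throughout, $u_C\in W_0^{1,p}(C)$ and $u_\Omega\in W_0^{1,p}(\Omega)$ are the unique weak solutions from the fixed-domain well-posedness discussion. We extend $u_C$ by zero to $\Omega$. Since $C\subset\Omega$, this extension lies in $W_0^{1,p}(\Omega)$, and it is approximated by $C_c^\infty(C)\subset C_c^\infty(\Omega)$.

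\emph{Nonnegativity.} We test the equation for $u_C$ with $\varphi=u_C^-=\max(-u_C,0)\in W_0^{1,p}(C)$. On $\{u_C<0\}$ we have $\nabla_g u_C^-=-\nabla_g u_C$, and $\nabla_g u_C^-=0$ a.e. elsewhere. The weak formulation therefore gives
\[
-\int_C|\nabla_g u_C^-|_g^p\,dV_g=\int_C f\,u_C^-\,dV_g\ge0,
\]
using $f\ge0$. Hence $\nabla_g u_C^-=0$. By the Poincar\'e inequality on $C$ (or Lemma~\ref{lem:uniform-poincare}), $u_C^-=0$, so $u_C\ge0$. The same argument gives $u_\Omega\ge0$ in $\Omega$.

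\emph{Ordering.} Set $w=(u_C-u_\Omega)^+$, with $u_C$ extended by zero. The key step is to check that $w\in W_0^{1,p}(C)$, so that it is an admissible test function on both domains. Since $u_\Omega\ge0$, we have $0\le w\le u_C^+$. Take $\psi_k\in C_c^\infty(C)$ with $\psi_k\to u_C$ in $W^{1,p}$. Then $(\psi_k-u_\Omega)^+$ vanishes outside $\operatorname{supp}\psi_k$, which is compact in $C$. These functions lie in $W_0^{1,p}(C)$ and converge to $w$ in $W^{1,p}$, by continuity of $v\mapsto v^+$ on $W^{1,p}$. This approximation is the only technical point. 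An alternative is the standard lattice fact: if $0\le w\le v$ with $v\in W_0^{1,p}(C)$, then $w\in W_0^{1,p}(C)$.

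Now test the $C$-equation and the $\Omega$-equation with $w$; this is legitimate because $w$, extended by zero, also lies in $W_0^{1,p}(\Omega)$. Both right-hand sides equal $\int_C fw\,dV_g$, since $\operatorname{supp}f\subset K\subset C$. Subtracting gives
\[
\int_{\{u_C>u_\Omega\}}\bigl(A(\nabla_g u_C)-A(\nabla_g u_\Omega)\bigr)\cdot_g(\nabla_g u_C-\nabla_g u_\Omega)\,dV_g=0.
\]
By strict monotonicity, $\nabla_g u_C=\nabla_g u_\Omega$ a.e. on $\{u_C>u_\Omega\}$, so $\nabla_g w=0$ a.e. Since $w\in W_0^{1,p}(C)$, the Poincar\'e inequality gives $w=0$. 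Hence $u_C\le u_\Omega$ in $C$, and combined with the first step, $0\le u_C\le u_\Omega$ in $C$.

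The only delicate step is membership of $w$ in $W_0^{1,p}(C)$. It needs no boundary regularity beyond what is assumed, and nonnegativity of $u_\Omega$ is exactly what controls the boundary values there.
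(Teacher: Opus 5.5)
Your proof is correct and follows the same route as the paper: you test both weak equations with $w=(u_C-u_\Omega)^+$, which lies in $W_0^{1,p}(C)$ precisely because $u_\Omega\ge0$, and strict monotonicity of $A(\xi)=|\xi|_g^{p-2}\xi$ then gives $\nabla_g w=0$ and hence $w=0$. The extra details you supply, namely the explicit $u_C^-$ argument for nonnegativity and the approximation by $(\psi_k-u_\Omega)^+$ showing $w\in W_0^{1,p}(C)$, fill in steps that the paper only asserts.
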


\begin{proof}
Nonnegativity follows from the weak maximum principle since $f\ge0$ and both solutions have zero boundary data on their respective domains. To compare the two solutions, set $w=(u_C-u_\Omega)^+$. Since $u_C=0$ on $\partial C$ and $u_\Omega\ge0$ in $C$, the function $w$ belongs to $W_0^{1,p}(C)$. Its zero extension to $\Omega$ belongs to $W_0^{1,p}(\Omega)$, so it is admissible as a test function in the equation for $u_\Omega$. Using $w$ as a test function in the two weak equations gives
\[
\int_C\bigl(A(\nabla_g u_C)-A(\nabla_g u_\Omega)\bigr)\cdot_g\nabla_g w\,dV_g=0,
\qquad A(\xi)=|\xi|_g^{p-2}\xi.
\]
On $\{u_C>u_\Omega\}$ one has $\nabla_gw=\nabla_g u_C-\nabla_g u_\Omega$, so strict monotonicity of $A$ implies $\nabla_gw=0$ almost everywhere. Since $w\in W_0^{1,p}(C)$, $w=0$. Thus $u_C\le u_\Omega$ in $C$.\end{proof}

\begin{proposition}[Monotonicity of the $p$-torsion under inclusion]
\label{prop:torsion-monotonicity}
Let $\Omega_1,\Omega_2\in\mathcal A_C(M)$ satisfy
$\Omega_1\subset\Omega_2$, and let $u_1,u_2$ be the corresponding
weak solutions. Then
\[
0\le u_1\le u_2\qquad\text{in }\Omega_1,
\]
and consequently
\[
T_{p,g}(\Omega_1)\le T_{p,g}(\Omega_2).
\]
No strict inequality is asserted without additional hypotheses.
\end{proposition}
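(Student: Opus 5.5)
The plan is to repeat the argument of Proposition~\ref{prop:comparison}, with $C$ replaced by $\Omega_1$ and $\Omega$ replaced by $\Omega_2$. Then we read off the torsion inequality from the identity $T_{p,g}=\int fu$, using that $f$ is supported in $K\subset C\subset\Omega_1$.

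\emph{Step 1: nonnegativity.} Since $f\ge0$, we test the equation for $u_i$ with $u_i^-=\max(-u_i,0)\in W_0^{1,p}(\Omega_i)$. This gives
\[
-\int_{\Omega_i}|\nabla_g u_i^-|_g^p\,dV_g=\int_{\Omega_i}f u_i^-\,dV_g\ge0 .
\]
Hence $\nabla_g u_i^-=0$, and since $u_i^-\in W_0^{1,p}(\Omega_i)$ the Poincar\'e inequality (Lemma~\ref{lem:uniform-poincare}) yields $u_i^-=0$. So $u_1\ge0$ and $u_2\ge0$.

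\emph{Step 2: comparison.} Set $w=(u_1-u_2)^+$ on $\Omega_1$. We have $u_1\in W_0^{1,p}(\Omega_1)$ and $u_2\ge0$, so $0\le w\le u_1^+$. We then argue that $w\in W_0^{1,p}(\Omega_1)$.

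This is the one technical point. Take $\varphi_k\in C_c^\infty(\Omega_1)$ with $\varphi_k\to u_1$ in $W^{1,p}$. The functions $(\varphi_k-u_2)^+$ have compact support in $\Omega_1$ and lie in $W^{1,p}$, hence in $W_0^{1,p}(\Omega_1)$. Moreover $v\mapsto v^+$ is continuous on $W^{1,p}$, so $(\varphi_k-u_2)^+\to w$ and therefore $w\in W_0^{1,p}(\Omega_1)$.

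The zero extension of $w$ lies in $W_0^{1,p}(\Omega_2)$, so $w$ is an admissible test function in both weak equations. Subtracting them gives
\[
\int_{\Omega_1}\bigl(A(\nabla_g u_1)-A(\nabla_g u_2)\bigr)\cdot_g\nabla_g w\,dV_g=0 .
\]
On $\{u_1>u_2\}$ we have $\nabla_g w=\nabla_g u_1-\nabla_g u_2$, and elsewhere $\nabla_g w=0$ a.e. The integrand is therefore nonnegative, so it vanishes a.e. Strict monotonicity of $A$ then forces $\nabla_g w=0$ a.e., and Poincar\'e gives $w=0$. Hence $u_1\le u_2$ in $\Omega_1$.

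\emph{Step 3: torsion.} By Definition~\ref{def:p-torsion}, $T_{p,g}(\Omega_i)=\int_{\Omega_i}fu_i\,dV_g=\int_K fu_i\,dV_g$, because $\operatorname{supp}f\subset K\subset C\subset\Omega_1$. Since $f\ge0$ and $u_1\le u_2$ on $K$,
\[
T_{p,g}(\Omega_1)\le T_{p,g}(\Omega_2).
\]

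The only genuinely delicate point is the $W_0^{1,p}(\Omega_1)$ membership of $w$ in Step 2. For smooth boundaries one could instead use the trace characterization of $W_0^{1,p}$, but the density argument above avoids any boundary regularity. No strict inequality is claimed, so no strong comparison principle is needed.
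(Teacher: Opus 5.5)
Your proposal is correct and follows the paper's route: transfer the comparison argument of Proposition~\ref{prop:comparison} to the pair $\Omega_1\subset\Omega_2$, then read off the torsion inequality from the energy identity $T_{p,g}(\Omega_i)=\int f u_i\,dV_g$. The only differences are minor and harmless: you justify $(u_1-u_2)^+\in W_0^{1,p}(\Omega_1)$ by a density argument where the paper simply asserts it, and you conclude using $\operatorname{supp}f\subset K\subset\Omega_1$ where the paper uses $u_2\ge0$ on $\Omega_2\setminus\Omega_1$.
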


\begin{proof}
The comparison argument used in Proposition~\ref{prop:comparison} applies
with $C$ replaced by $\Omega_1$ and $\Omega$ replaced by $\Omega_2$; hence
$u_1\le u_2$ in $\Omega_1$. By the energy identity and $f\ge0$,
\[
T_{p,g}(\Omega_1)
=\int_{\Omega_1}fu_1\,dV_g
\le\int_{\Omega_1}fu_2\,dV_g
\le\int_{\Omega_2}fu_2\,dV_g
=T_{p,g}(\Omega_2).
\]
The last inequality uses $f\ge0$ and $u_2\ge0$.\end{proof}

\subsection{Optimality system}

For a minimizer $\Omega^\ast$, define
\[
\Gamma_0=\partial\Omega^\ast\cap\partial C
\]
and
\[
\Gamma=\partial\Omega^\ast\setminus\Gamma_0.
\]
At regular points of the free boundary, arbitrary small normal
variations are admissible.

\begin{theorem}[Free-boundary condition]
\label{thm:free-boundary-condition}
Assume the boundary and source regularity required for the classical
shape derivative, so that the Hadamard formula applies. Then at every
regular point of $\Gamma$,
\[
\boxed{
|\nabla_g u_{\Omega^\ast}|_g^p
=
\sigma H_{\partial\Omega^\ast}+k^p.
}
\]
\end{theorem}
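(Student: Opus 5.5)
The plan is to derive the condition as the Euler--Lagrange equation of $\mathcal J_{p,g}$ at $\Omega^\ast$ with respect to normal perturbations supported near a regular point of $\Gamma$.

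\emph{Step 1: the perturbations are admissible.} Fix a regular point $x_0\in\Gamma$. Since $\Gamma_0$ is closed, $x_0$ has a positive distance from $\partial C$. Take a smooth vector field $X$ supported in a small geodesic ball around $x_0$ that does not meet $\overline C$. Let $\Phi_t$ be its flow and set $\Omega_t=\Phi_t(\Omega^\ast)$. For $|t|$ small, $\Omega_t$ still contains $C$. The defining function $h_{\Omega^\ast}\circ\Phi_t^{-1}$ stays $C^{2,\alpha}$-close to $h_{\Omega^\ast}$. The uniform constants $A,c_0,\rho_0,\rho_1$ are the ones attached to the class; if $\Omega^\ast$ saturates one of them, I would first replace them by slightly larger (respectively smaller) constants, or assume that the regularity hypothesis includes this, which is what ``regular point'' is meant to encode. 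Hence $\Omega_t\in\mathcal A_C(M)$ for $|t|<t_0$, and minimality gives $\mathcal J_{p,g}(\Omega_t)\ge\mathcal J_{p,g}(\Omega^\ast)$. Because the two-sided family is admissible, the derivative at $t=0$ must vanish.

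\emph{Step 2: Hadamard formulas.} The volume and perimeter derivatives are already recorded in Section~2: they are $\int_{\partial\Omega^\ast}V_n\,dS_g$ and $\int_{\partial\Omega^\ast}H\,V_n\,dS_g$, where $V_n=g(X,\nu)$.

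\emph{Step 3: derivative of the torsion.} This is the main step. I would use the dual characterization $T_{p,g}(\Omega)=-p'\min_{W_0^{1,p}(\Omega)}\mathcal E_\Omega$. Pull back to the fixed domain $\Omega^\ast$ with the metric $g_t=\Phi_t^*g$. Because $X$ vanishes on $K$, the transported source $f\circ\Phi_t$ equals $f$, and $dV_{g_t}=dV_g$ on $K$. So
\[
T(t)=-p'\min_v\Bigl[\tfrac1p\int|\nabla_{g_t}v|^p_{g_t}dV_{g_t}-\int fv\,dV_g\Bigr].
\]
A Danskin/envelope argument, justified by the stability theorem (the minimizers converge strongly, as in Theorem~\ref{thm:stability-p-laplace}), gives
\[
T'(0)=-p'\cdot\tfrac1p\int_{\Omega^\ast}\frac{d}{dt}\Big|_{0}\bigl(|\nabla_{g_t}u|^p_{g_t}dV_{g_t}\bigr)=-(p-1)^{-1}\int_{\Omega^\ast}\langle S,\mathcal L_Xg\rangle\,dV_g ,
\]
where $S=|\nabla u|^p\,g/p\cdot\tfrac12\ldots$ is the $p$-energy stress tensor $S=\tfrac1p|\nabla u|^pg-|\nabla u|^{p-2}\nabla u\otimes\nabla u$ (up to the factor conventions).

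\emph{Step 4: boundary conversion.} Integrate by parts. Away from $K$ the stress tensor is divergence free, since $-\Delta_{g,p}u=0$ there, so only a boundary term survives. On $\partial\Omega^\ast$ we have $\nabla u=(\partial_\nu u)\nu$ by the regularity subsection. This gives $\langle SX,\nu\rangle=(\tfrac1p-1)|\nabla u|^pV_n$, and therefore
\[
T'(0)=\int_{\partial\Omega^\ast}|\nabla_gu|_g^pV_n\,dS_g .
\]
This is consistent with $p=2$. Here is where the hypothesis $u\in C^{1,\beta}(\overline{\Omega^\ast})$, together with enough interior regularity near $\Gamma$ (e.g.\ $W^{2,2}$ of the flux, or approximation by regularized problems), is needed. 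This is the step I expect to be the main obstacle. Because of the degeneracy of the $p$-Laplacian, I would rigorously perform the divergence computation on $\{|\nabla u|>0\}$ near the boundary, which is nonempty by Hopf since $\partial_\nu u<0$, or via the domain-variation identity tested with $X\cdot\nabla u$ after mollification.

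\emph{Step 5: conclusion.} Combining the three derivatives gives
\[
0=\int_{\partial\Omega^\ast}\bigl(-|\nabla_gu|^p+\sigma H+k^p\bigr)V_n\,dS_g
\]
for every smooth $V_n$ supported near $x_0$. The fundamental lemma and continuity of the integrand then give the pointwise identity at $x_0$.
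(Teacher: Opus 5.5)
Your argument for the theorem itself is the paper's. You localize at $x_0\in\Gamma$ away from $\overline C$, use that the flows of $\pm X$ fix a neighborhood of $\overline C$ and so keep $C\subset\Omega_t$, deduce $d\mathcal J_{p,g}(\Omega^\ast)[X]=0$, insert the Hadamard formula, and apply the fundamental lemma. A small correction to Step 1: what you need is $x_0\notin\overline C$, which holds because $x_0\notin\Omega^\ast\supset C$ and $x_0\notin\partial C$; closedness of $\Gamma_0$ plays no role.

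The genuine difference is Steps 3--4. The statement takes the Hadamard formula as a hypothesis, and the paper proves it in Proposition~\ref{prop:hadamard-p-torsion} via material derivatives. That proof assumes $t\mapsto u_t\circ\Phi_t$ is differentiable into a space embedded in $C^1(\overline\Omega)$. Your envelope route needs less. Write $E_t$ for the pulled-back energy, $m(t)=\min E_t$ and $v_t=u_t\circ\Phi_t$. Since $f\circ\Phi_t=f$ and $dV_{g_t}=dV_g$ on $K$, you have
\[
E_t(v_t)-E_0(v_t)\le m(t)-m(0)\le E_t(u)-E_0(u).
\]
Combined with the strong convergence $v_t\to u$ in $W_0^{1,p}(\Omega^\ast)$ supplied by the argument of Theorem~\ref{thm:stability-p-laplace}, this gives $T'(0)=-p'\int_{\Omega^\ast}S_u:\nabla X\,dV_g$ with no differentiability of the state at all. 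The passage to the boundary then only needs classical regularity on $\overline{\Omega^\ast}\cap\operatorname{supp}X$. You get this by shrinking the ball until Hopf at $x_0$ and continuity of $\nabla_g u$ up to the boundary keep $|\nabla_gu|_g$ bounded below there, so the equation is nondegenerate. That, rather than nonemptiness of $\{|\nabla_gu|_g>0\}$, is what Step 4 should say.

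Two further corrections. First, the constant in Step 3 is wrong. In fact
\[
\frac{d}{dt}\Big|_{t=0}\bigl(|\nabla_{g_t}u|_{g_t}^p\,dV_{g_t}\bigr)=p\,S_u:\nabla X\,dV_g=\tfrac p2\langle S_u,\mathcal L_Xg\rangle\,dV_g,
\]
so $T'(0)=-p'\int S_u:\nabla X\,dV_g$. Your $-(p-1)^{-1}\int\langle S_u,\mathcal L_Xg\rangle$ would give $\tfrac2p\int_{\partial\Omega^\ast}|\nabla_gu|_g^pV_n\,dS_g$, which is correct only at $p=2$, so your $p=2$ check cannot detect the error.

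Second, replacing $A,c_0,\rho_0,\rho_1$ by weaker constants is not a valid fix if $\Omega^\ast$ saturates one of them. Minimality over $\mathcal A_C(M)$ says nothing about competitors in a larger class. Only your second option is consistent: non-saturation must be part of the hypothesis. The paper makes exactly this assumption tacitly when it asserts that the flows of $\pm X$ preserve admissibility.
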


\begin{proof}
Let $x_0\in\Gamma$ be a regular free-boundary point. Since $x_0$ is
separated from the fixed inclusion constraint $C$, choose a neighborhood
$U\subset M\setminus C$ of $x_0$. For every $X\in C_c^2(U;TM)$, the flows
of $X$ and $-X$ preserve admissibility for sufficiently small times.
Minimality of $\Omega^*$ therefore gives
\[
d\mathcal J_{p,g}(\Omega^*)[X]\ge0,
\qquad
d\mathcal J_{p,g}(\Omega^*)[-X]\ge0,
\]
and hence $d\mathcal J_{p,g}(\Omega^*)[X]=0$. By the Hadamard formula,
\[
0=\int_{\Gamma\cap U}
\left(\sigma H_{\partial\Omega^*}+k^p-
|\nabla_g u_{\Omega^*}|_g^p\right)g(X,\nu)\,dS_g.
\]
At a regular boundary point the normal trace $g(X,\nu)$ can be prescribed
arbitrarily on a smaller neighborhood of $x_0$. The fundamental lemma of
calculus of variations then implies that the integrand vanishes at $x_0$,
which gives the stated equality.
\end{proof}

\subsection{Curvature comparison at contact}

\begin{lemma}[Riemannian curvature comparison]
\label{lem:curvature-comparison}
Let $x_0\in\partial\Omega^*\cap\partial C$ be a regular tangential
contact point. Then
\[
\nu_{\Omega^*}(x_0)=\nu_C(x_0)
\]
and
\[
\mathrm{II}_{\partial\Omega^*}(x_0)
\leq
\mathrm{II}_{\partial C}(x_0)
\]
as quadratic forms on the common tangent space. Consequently,
\[
H_{\partial\Omega^*}(x_0)
\leq
H_{\partial C}(x_0).
\]
\end{lemma}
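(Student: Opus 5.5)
The argument is a local comparison of two $C^2$ hypersurfaces that touch at $x_0$ while one domain contains the other, carried out in a single normal chart. The inclusion $C\subset\Omega^*$ (condition 1 of the $RC$-$GNP$ definition) is the only global information needed. A point of $\partial C$ that lies on $\partial\Omega^*$ is then a boundary point of both domains, and near it $C$ lies on the inner side of $\partial\Omega^*$.

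\emph{Normals.} Let $h^*$ be a defining function of $\Omega^*$ as in the definition of $\mathcal A_C(M)$. Since $C\subset\Omega^*$, we have $h^*\ge 0$ on $\overline C$, and $h^*(x_0)=0$. Take any curve $\gamma\subset\overline C$ with $\gamma(0)=x_0$ and $\gamma'(0)=v$. Then $t\mapsto h^*(\gamma(t))$ has a minimum at $t=0$, so $dh^*(v)=0$ for $v\in T_{x_0}\partial C$. If $v$ points into $C$, the same argument gives $dh^*(v)\ge 0$. Because $\nabla_g h^*(x_0)\neq0$, these two facts give $T_{x_0}\partial C=T_{x_0}\partial\Omega^*$. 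Since $\nu_{\Omega^*}=-\nabla_g h^*/|\nabla_g h^*|_g$ and the inward normal of $C$ points into $\Omega^*$, we get $\nu_{\Omega^*}(x_0)=\nu_C(x_0)$. This is what ``regular tangential contact'' encodes, and it can also be taken as part of that hypothesis.

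\emph{Second fundamental forms.} Use the normal tubular coordinates $\mathcal E_C(x,t)=\exp_x(t\nu_C(x))$ from the standing assumption on $\partial C$. By the implicit function theorem and the equality of normals, near $x_0$ the hypersurface $\partial\Omega^*$ is a normal graph
\[
\partial\Omega^*=\{\exp_x(\eta(x)\nu_C(x))\},
\]
with $\eta\in C^2$ defined near $x_0$ in $\partial C$, $\eta(x_0)=0$ and $d\eta(x_0)=0$. The inclusion $C\subset\Omega^*$ forces $\eta\ge0$, because $\Omega^*$ lies on the side $t<\eta$ and must contain the side $t<0$. Hence $x_0$ is a local minimum of $\eta$, and $\mathrm{Hess}\,\eta(x_0)\ge0$ in the induced metric.

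Next, compute the second fundamental form of a normal graph at a point where the graph function vanishes to first order. The key computation is
\[
\mathrm{II}_{\text{graph}}(x_0)=\mathrm{II}_{\partial C}(x_0)-\mathrm{Hess}_{\partial C}\eta(x_0).
\]
To check it, extend $\nu$ by the gradient of the signed-distance-type function $t-\eta(x)$ normalized, use the Gauss lemma, under which $\partial_t$ is the unit geodesic normal and is orthogonal to the level sets $\{t=\text{const}\}$, and differentiate along tangent vectors at $x_0$. The first-order terms vanish because $d\eta(x_0)=0$. The sign is fixed by the convention $\mathrm{II}(X,Y)=g(\nabla_X\nu,Y)$ with outward $\nu$: bending the surface outward in the direction of positive $\eta$ makes it more concave relative to the outward normal. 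As a sanity check, take two round spheres in $\mathbb R^n$ tangent internally. The outer sphere has the smaller second fundamental form $1/R\le 1/r$, which is consistent with the claimed inequality. Combining the formula with $\mathrm{Hess}\,\eta(x_0)\ge0$ gives $\mathrm{II}_{\partial\Omega^*}(x_0)\le\mathrm{II}_{\partial C}(x_0)$.

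Finally, take the trace with respect to the common induced metric on $T_{x_0}\partial C=T_{x_0}\partial\Omega^*$; the induced metrics agree at $x_0$ because the tangent spaces coincide. This yields $H_{\partial\Omega^*}(x_0)\le H_{\partial C}(x_0)$.

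The main obstacle is the graph formula for $\mathrm{II}$ on a curved manifold. The curvature of $g$ enters only through terms multiplied by $\eta$ or $d\eta$, and one must verify carefully that these all vanish at $x_0$. I would handle this in Fermi coordinates, where $g=dt^2+g_t$ and $\partial_t g_t|_{t=0}=2\,\mathrm{II}_{\partial C}$ up to sign. In these coordinates the Riemannian computation reduces at $x_0$ to the Euclidean one.
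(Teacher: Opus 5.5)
Your proposal is correct, but it follows a different route from the paper's proof.

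\textbf{The paper's route.} The paper works in geodesic normal coordinates centred at $x_0$. It writes both $\partial C$ and $\partial\Omega^*$ as graphs $x_n=h_C(x')$ and $x_n=h_*(x')$ over the common tangent hyperplane, and reads off $h_C\le h_*$ from $C\subset\Omega^*$. At $x_0$ the Christoffel symbols and both gradients vanish, so $\mathrm{II}=-D^2h$ for each surface, and the comparison reduces to $D^2h_*(x_0')\ge D^2h_C(x_0')$. The two surfaces are treated symmetrically and no graph formula beyond this one is needed.

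\textbf{Your route.} You use Fermi coordinates adapted to $\partial C$ and write $\partial\Omega^*$ as a normal graph $\eta\ge 0$ over $\partial C$. You then need the extra identity $\mathrm{II}_{\partial\Omega^*}(x_0)=\mathrm{II}_{\partial C}(x_0)-\mathrm{Hess}_{\partial C}\eta(x_0)$ at a critical zero of $\eta$. Your identity and its sign are right: with $\psi=\eta-t$ one gets $\mathrm{II}=-\mathrm{Hess}_g\psi=\mathrm{Hess}_g t-\mathrm{Hess}_g\eta$ at $x_0$.

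\textbf{What your route buys.}
\begin{itemize}
\item You actually derive $\nu_{\Omega^*}(x_0)=\nu_C(x_0)$ from the inclusion and the nondegenerate defining function. The paper simply builds the common normal into ``regular tangential contact'' when it sets up its coordinates.
\item You identify the defect $\mathrm{II}_{\partial C}-\mathrm{II}_{\partial\Omega^*}$ exactly as the Hessian of the gap function.
\end{itemize}

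\textbf{What the paper's route buys: economy and regularity.} Normal coordinates at $x_0$ form a smooth chart, so both graphs are automatically $C^{2,\alpha}$. By contrast, the Fermi chart $\mathcal E_C(x,t)=\exp_x(t\nu_C(x))$ of a $C^{2,\alpha}$ hypersurface is only $C^{1,\alpha}$, because $\nu_C$ is only $C^{1,\alpha}$. The implicit function theorem then gives $\eta\in C^{1,\alpha}$, not $\eta\in C^2$ as you assert. So $\mathrm{Hess}_{\partial C}\eta$ and your Fermi-coordinate verification need more care than you indicate.

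\textbf{How to repair this step.} The cleanest fix is to replace $\eta$ by the restriction to $\partial\Omega^*$ of the signed distance $d_C$ to $\partial C$, which is $C^{2,\alpha}$ near $\partial C$.
\begin{itemize}
\item $d_C|_{\partial\Omega^*}\ge 0$ near $x_0$, since $\partial\Omega^*\cap C=\varnothing$, and it vanishes at $x_0$, so its intrinsic Hessian at $x_0$ is nonnegative.
\item $\mathrm{Hess}_{\partial\Omega^*}(d_C|_{\partial\Omega^*})=\mathrm{Hess}_g d_C-\mathrm{II}_{\partial\Omega^*}\,\partial_{\nu}d_C$.
\item At $x_0$ one has $\mathrm{Hess}_g d_C=\mathrm{II}_{\partial C}$ on the tangent space and $\partial_\nu d_C=1$.
\end{itemize}
Together these give $\mathrm{II}_{\partial C}(x_0)-\mathrm{II}_{\partial\Omega^*}(x_0)\ge 0$ with no loss of derivatives. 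Alternatively, assume $\partial C\in C^3$.
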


\begin{proof}
Choose geodesic normal coordinates centered at $x_0$ so that the common
tangent space is $\{x_n=0\}$ and the common outward normal is
$\partial_{x_n}$ at $x_0$. Since the two boundaries are tangent and
$C\subset\Omega^*$, after shrinking the coordinate neighborhood if
necessary, both boundaries can be written as graphs
\[
x_n=h_C(x')\qquad\text{and}\qquad x_n=h_*(x'),
\]
where $x'=(x_1,\ldots,x_{n-1})$, the interiors of both domains lie below
their respective graphs, and
\[
h_C(x')\le h_*(x')
\]
near $x_0$. At the contact point we have
\[
h_C(x_0')=h_*(x_0'),
\qquad
Dh_C(x_0')=Dh_*(x_0')=0,
\]
so $h_*-h_C$ has a local minimum at $x_0'$. Consequently,
\[
D^2h_*(x_0')-D^2h_C(x_0')\ge0
\]
as quadratic forms on the common tangent space.
It remains to keep track of the sign relating the graph Hessian to the
second fundamental form. With the convention
\[
\mathrm{II}(X,Y)=g(\nabla_X\nu,Y),
\]
and with the outward normal pointing in the $+x_n$ direction, the
Christoffel symbols vanish at $x_0$ in geodesic normal coordinates and
$Dh_C(x_0')=Dh_*(x_0')=0$. Hence, for tangent vectors
$X,Y\in T_{x_0}\partial C=T_{x_0}\partial\Omega^*$,
\[
\mathrm{II}_{\partial C}(X,Y)=-D^2h_C(x_0')[X,Y],
\qquad
\mathrm{II}_{\partial\Omega^*}(X,Y)=-D^2h_*(x_0')[X,Y].
\]
Therefore the Hessian inequality above gives
\[
\mathrm{II}_{\partial\Omega^*}(x_0)
\le
\mathrm{II}_{\partial C}(x_0)
\]
as quadratic forms. Taking the trace with respect to the common induced
metric yields
\[
H_{\partial\Omega^*}(x_0)
\le
H_{\partial C}(x_0).
\]
\end{proof}

\subsection{A sufficient condition for a free-boundary solution}

\begin{theorem}[Sufficient condition]
\label{thm:sufficient-condition}
Assume that $f\in C^{1,\alpha}(M)$ for some $\alpha\in(0,1)$, with
$f\geq0$ and $f\not\equiv0$. Let $u_C$ be the solution in $C$.
Assume in addition that, for some $\beta\in(0,1)$,
$u_C\in C^{1,\beta}(\overline C)$ and
$u_{\Omega^*}\in C^{1,\beta}(\overline{\Omega^*})$.
Assume that every point of $\partial C\cap\partial\Omega^*$, if nonempty,
is a regular tangential contact point and satisfies the interior sphere
condition required by the Hopf boundary point lemma.
Assume moreover that, at every such contact point, the strong
comparison principle applies to the ordered pair
$(u_{\Omega^*},u_C)$ and that the corresponding Hopf boundary point
principle yields the strict normal derivative inequality.
Finally, assume
\[
\boxed{
|\nabla_g u_C|_g^p
>
\sigma H_{\partial C}+k^p
\qquad\text{on }\partial C.
}
\]
Then $\Gamma_0=\partial\Omega^*\cap\partial C$ is empty. In particular,
$\partial\Omega^*$ is entirely free and the free-boundary Euler--Lagrange
condition holds on all of $\partial\Omega^*$:
\[
\boxed{
|\nabla_g u_{\Omega^*}|_g^p
=
\sigma H_{\partial\Omega^*}+k^p
\qquad\text{on }\partial\Omega^*.
}
\]
Thus $\Omega^*$ together with $u_{\Omega^*}$ solves the free-boundary problem
stated in the introduction.
\end{theorem}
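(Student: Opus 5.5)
We argue by contradiction: suppose $\Gamma_0=\partial\Omega^*\cap\partial C$ contains a point $x_0$. By hypothesis, $x_0$ is a regular tangential contact point. Lemma~\ref{lem:curvature-comparison} gives $\nu_{\Omega^*}(x_0)=\nu_C(x_0)=:\nu$ and $H_{\partial\Omega^*}(x_0)\le H_{\partial C}(x_0)$.

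Next we compare gradients at $x_0$. By Proposition~\ref{prop:comparison}, $0\le u_C\le u_{\Omega^*}$ in $C$, and both functions vanish at $x_0$. The assumed strong comparison principle for the pair $(u_{\Omega^*},u_C)$, combined with the assumed Hopf principle at $x_0$ (interior sphere condition, $C^{1,\beta}$ regularity up to the boundary), gives the strict inequality
\[
\partial_\nu u_{\Omega^*}(x_0)<\partial_\nu u_C(x_0)<0 .
\]
Here the second inequality is the Hopf lemma for $u_C>0$, using $f\ge0$, $f\not\equiv0$ and the connectedness of $C$. Since both functions vanish on their boundaries, the gradient formula from the regularity subsection gives $|\nabla_g u|_g=|\partial_\nu u|$. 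Therefore
\[
|\nabla_g u_{\Omega^*}(x_0)|_g^p>|\nabla_g u_C(x_0)|_g^p>\sigma H_{\partial C}(x_0)+k^p\ge\sigma H_{\partial\Omega^*}(x_0)+k^p ,
\]
where the middle inequality is the boxed hypothesis and the last one uses $\sigma>0$. So near $x_0$ the contact set satisfies a strict inequality of the form $|\nabla_g u_{\Omega^*}|_g^p>\sigma H_{\partial\Omega^*}+k^p$, and by continuity ($C^{1,\beta}$ gradients, $C^{2,\alpha}$ boundaries) this persists on a neighborhood of $x_0$ in $\partial\Omega^*$.

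The main step is to contradict minimality using a one-sided variation. At contact points only outward normal deformations preserve $C\subset\Omega$. We take $X\in C^2_c(U;TM)$ with $U$ a small neighborhood of $x_0$ and $g(X,\nu)\ge0$, $g(X,\nu)\not\equiv0$ on $\partial\Omega^*\cap U$. For small $t>0$ the flow keeps $C$ inside $\Phi_t(\Omega^*)$. It also preserves the uniform $RC$-$GNP$ bounds, after possibly assuming $\Omega^*$ satisfies the defining-function bounds with slack; this admissibility of small variations is the delicate point, and we take it as part of the standing hypotheses used for Theorem~\ref{thm:free-boundary-condition}. Minimality gives $d\mathcal J_{p,g}(\Omega^*)[X]\ge0$. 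The Hadamard formula, as used in Theorem~\ref{thm:free-boundary-condition}, instead gives
\[
d\mathcal J_{p,g}(\Omega^*)[X]=\int_{\partial\Omega^*\cap U}\bigl(\sigma H_{\partial\Omega^*}+k^p-|\nabla_g u_{\Omega^*}|_g^p\bigr)g(X,\nu)\,dS_g<0
\]
by the strict inequality on $U$. This contradiction shows that $\Gamma_0=\emptyset$. This is the contact optimality inequality: at contact we must have $|\nabla_g u_{\Omega^*}|^p\le\sigma H+k^p$.

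Finally, with $\Gamma_0$ empty, every boundary point lies in $\Gamma$. Since $\partial\Omega^*$ is $C^{2,\alpha}$, every point is regular, so Theorem~\ref{thm:free-boundary-condition} applies everywhere on $\partial\Omega^*$ and yields the boxed condition there. Together with the state equation, this shows that $(\Omega^*,u_{\Omega^*})$ solves the free-boundary problem stated in the introduction. The expected obstacle is the justification of the one-sided Hadamard formula at contact points and the admissibility of outward variations within $\mathcal A_C(M)$. The gradient comparison itself is entirely supplied by the assumed strong comparison and Hopf hypotheses.
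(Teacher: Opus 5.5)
Your argument follows the paper's proof. You use weak comparison together with the assumed strong comparison and Hopf principles at $x_0$, then the curvature comparison $H_{\partial\Omega^*}(x_0)\le H_{\partial C}(x_0)$, and conclude by contradicting the contact optimality inequality. The only difference is that you re-derive that inequality inline with a one-sided outward variation, where the paper cites it.

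One step needs a patch. When $\Omega^*=C$ you have $u_{\Omega^*}\equiv u_C$, so the strong comparison principle cannot give $\partial_\nu u_{\Omega^*}(x_0)<\partial_\nu u_C(x_0)$. The paper handles this with a case split. The case $\Omega^*=C$ directly contradicts contact optimality. When $C\subsetneq\Omega^*$, the paper checks $u_C\not\equiv u_{\Omega^*}$ using a point of $\partial C$ that lies in $\Omega^*$, where $u_C=0<u_{\Omega^*}$.

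Simpler still, the strict inequality is never needed, because the boxed hypothesis is already strict. The following facts give $\partial_\nu u_{\Omega^*}(x_0)\le\partial_\nu u_C(x_0)\le 0$:
\begin{itemize}
\item $0\le u_C\le u_{\Omega^*}$ in $C$;
\item both functions vanish at $x_0$;
\item the outward normal at $x_0$ is common to both boundaries;
\item both functions are $C^1$ up to the boundary.
\end{itemize}
Hence $|\nabla_g u_{\Omega^*}(x_0)|_g^p\ge|\nabla_g u_C(x_0)|_g^p>\sigma H_{\partial C}(x_0)+k^p$. This covers $\Omega^*=C$ at once and makes the strong comparison and Hopf hypotheses superfluous.
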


\begin{proof}
Suppose, for contradiction, that $x_0\in\Gamma_0$.

First consider the case $\Omega^*=C$. Then
$u_{\Omega^*}=u_C$ and the contact optimality inequality at $x_0$
gives
\[
|\nabla_g u_C(x_0)|_g^p
\leq
\sigma H_{\partial C}(x_0)+k^p,
\]
which contradicts the strict assumption
\[
|\nabla_g u_C|_g^p
>
\sigma H_{\partial C}+k^p
\qquad\text{on }\partial C.
\]

We may therefore assume that $C\subsetneq\Omega^*$. By the comparison
principle,
\[
0\leq u_C\leq u_{\Omega^*}
\qquad\text{in }C.
\]
Since $\Omega^*$ is connected and $f\geq0$, $f\not\equiv0$, the strong
maximum principle gives
\[
u_{\Omega^*}>0\qquad\text{in }\Omega^*.
\]
Because $C\subsetneq\Omega^*$, there exists a point of
$\partial C$ belonging to $\Omega^*$. At such a point
$u_C=0$ whereas $u_{\Omega^*}>0$. Hence
\[
u_C\not\equiv u_{\Omega^*}\qquad\text{in }C.
\]
By the assumed strong comparison principle,
\[
u_C<u_{\Omega^*}\qquad\text{in }C.
\]

At the regular tangential contact point $x_0$, the two functions agree
and vanish, and the assumed interior sphere condition allows us to apply
the Hopf boundary point principle to the ordered pair
$u_{\Omega^*}\geq u_C$. With respect to the common outward normal
$\nu$, we obtain
\[
\partial_\nu u_{\Omega^*}(x_0)
<
\partial_\nu u_C(x_0)<0.
\]
Consequently,
\[
|\nabla_g u_{\Omega^*}(x_0)|_g^p
>
|\nabla_g u_C(x_0)|_g^p.
\]

On the other hand, the curvature comparison lemma gives
\[
H_{\partial\Omega^*}(x_0)
\leq
H_{\partial C}(x_0).
\]
Hence, by the strict assumption on $\partial C$,
\[
|\nabla_g u_C(x_0)|_g^p
>
\sigma H_{\partial C}(x_0)+k^p
\geq
\sigma H_{\partial\Omega^*}(x_0)+k^p.
\]
Combining the last two inequalities gives
\[
|\nabla_g u_{\Omega^*}(x_0)|_g^p
>
\sigma H_{\partial\Omega^*}(x_0)+k^p,
\]
which contradicts the contact optimality inequality.

Thus $\Gamma_0=\varnothing$.
\end{proof}

\section{Optimality condition}

\subsection{Riemannian domain deformations}

Let $X$ be a sufficiently regular vector field on $M$ and
$\Phi_t$ its local flow. Set
\[
\Omega_t=\Phi_t(\Omega).
\]
The shape derivative is
\[
d\mathcal J_{p,g}(\Omega)[X]
=
\left.\frac d{dt}
\mathcal J_{p,g}(\Omega_t)\right|_{t=0}.
\]

\subsection{Material and shape derivatives}

Let $u_t=u_{\Omega_t}$. The material derivative is defined by
transporting $u_t$ to the fixed domain:
\[
\dot u
=
\left.\frac d{dt}
(u_t\circ\Phi_t)\right|_{t=0}.
\]
The shape derivative is obtained after removing the transport
contribution:
\[
u'
=
\dot u-\langle\nabla_g u,X\rangle_g.
\]
On $\partial\Omega$, since $u=0$,
\[
u'
=
-\partial_\nu u\,g(X,\nu).
\]

As a consistency check, when $p=2$ one has $p'=2$, $\Delta_{g,p}=\Delta_g$, and the $p$-torsion functional reduces to the usual Riemannian torsion. The stress tensor and Hadamard formula below then reduce to the corresponding linear expressions.

\subsection{Shape derivative of the $p$-torsion}

We now derive the first variation of the $p$-torsional functional.
The argument is formulated intrinsically by pulling the metric back
under the deformation flow.

The assumptions in this subsection are deliberately stronger than those used for existence and stability. The latter require only $f\in L^{p'}(M)$, whereas the present boundary calculation is stated under additional regularity and shape-differentiability assumptions so that normal traces and pointwise boundary identities are meaningful.

\begin{proposition}[Hadamard formula for the Riemannian $p$-torsion]
\label{prop:hadamard-p-torsion}
Let $\Omega\in\mathcal A_C(M)$ with
$\partial\Omega\in C^{2,\alpha}$, let $1<p<\infty$, and assume
$f\in C^{1,\alpha}(M)$. Let $u=u_\Omega$ be the solution of
\[
\begin{cases}
-\Delta_{g,p}u=f & \text{in }\Omega,\\
u=0 & \text{on }\partial\Omega.
\end{cases}
\]
Assume that $u\in C^{1,\beta}(\overline\Omega)$ for some $\beta\in(0,1)$.
Assume moreover that the material state map $t\mapsto u_t\circ\Phi_t$ is differentiable at $t=0$ in a function space continuously embedded in $C^1(\overline\Omega)$, and that its derivative can be used in the weak formulation and in the transport identities below. This is the precise shape-differentiability hypothesis needed for the computation; it is not being inferred automatically from the Hölder regularity of $f$.
Let $X\in C^2(TM)$, let $\Phi_t$ be its flow, and put
\[
\Omega_t=\Phi_t(\Omega).
\]
Then
\begin{equation}
dT_{p,g}(\Omega)[X]
=
\int_{\partial\Omega}
|\partial_\nu u|^p g(X,\nu)\,dS_g.
\label{eq:hadamard-p-torsion}
\end{equation}
Equivalently,
\[
dT_{p,g}(\Omega)[X]
=
\int_{\partial\Omega}
|\nabla_g u|_g^p g(X,\nu)\,dS_g.
\]
\end{proposition}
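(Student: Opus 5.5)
The plan is to exploit the variational characterization of the torsion from the energy identity section,
\[
T_{p,g}(\Omega_t)=-p'\,\mathcal E_{\Omega_t}(u_t),
\]
and to differentiate the minimum value by an envelope (Danskin-type) argument on the fixed domain $\Omega$. Following the pullback identity, I set $g_t=\Phi_t^*g$ and $v_t=u_t\circ\Phi_t\in W_0^{1,p}(\Omega)$. Then
\[
\mathcal E_{\Omega_t}(u_t)=\mathcal F(t,v_t),\qquad
\mathcal F(t,v)=\frac1p\int_\Omega|\nabla_{g_t}v|_{g_t}^p\,dV_{g_t}-\int_\Omega (f\circ\Phi_t)\,v\,dV_{g_t}.
\]
By the assumed differentiability of $t\mapsto v_t$ at $t=0$, with derivative $\dot u$, the chain rule gives
\[
\frac{d}{dt}\mathcal F(t,v_t)\Big|_{t=0}=\partial_t\mathcal F(0,u)+D_v\mathcal F(0,u)[\dot u].
\]
Since every $v_t$ lies in $W_0^{1,p}(\Omega)$, we have $\dot u\in W_0^{1,p}(\Omega)$. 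The weak Euler equation for $u$ therefore kills the second term. It remains only to compute $\partial_t\mathcal F(0,u)$ with $u$ frozen.

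For this explicit derivative I use the standard flow identities
\[
\partial_t dV_{g_t}|_{0}=\operatorname{div}_gX\,dV_g,\qquad
\partial_t (f\circ\Phi_t)|_0=g(\nabla_gf,X),\qquad
\partial_t g_t|_0=\mathcal L_Xg.
\]
The last one gives $\partial_t|\nabla_{g_t}v|^2_{g_t}=-(\mathcal L_Xg)(\nabla_g v,\nabla_g v)$. Consequently
\[
\partial_t\mathcal F(0,u)=\int_\Omega \Bigl(\tfrac1p|\nabla_gu|^p\operatorname{div}_gX-|\nabla_gu|^{p-2}g(\nabla_{\nabla u}X,\nabla u)\Bigr)dV_g-\int_\Omega \operatorname{div}_g(fX)\,u\,dV_g .
\]
This is the volume form of the derivative, written through the nonlinear stress tensor
\[
S=\tfrac1p|\nabla_gu|^p g-|\nabla_gu|^{p-2}\nabla_gu\otimes\nabla_gu .
\]
The next step is to convert it into a boundary integral. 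Formally, $\operatorname{div}_g S=-|\nabla u|^{p-2}\nabla u\cdot(\ldots)$ combines with the equation $-\Delta_{g,p}u=f$ to cancel the source term, since $\operatorname{div}S\cdot X=f\,g(\nabla u,X)$ and $-\int\operatorname{div}(fX)u=\int f\,g(X,\nabla u)$ up to a boundary term that vanishes because $u=0$ there. Only the flux $\int_{\partial\Omega}S(X,\nu)\,dS_g$ survives.

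On $\partial\Omega$, the identity $\nabla_gu=(\partial_\nu u)\nu$ from the regularity subsection gives
\[
S(X,\nu)=\bigl(\tfrac1p-1\bigr)|\partial_\nu u|^p\,g(X,\nu)=-\tfrac1{p'}|\partial_\nu u|^p\,g(X,\nu).
\]
Hence
\[
d\mathcal E_\Omega[X]=-\tfrac1{p'}\int_{\partial\Omega}|\partial_\nu u|^p g(X,\nu)\,dS_g,
\]
and multiplying by $-p'$ yields \eqref{eq:hadamard-p-torsion}. The second displayed form follows from $|\nabla_gu|_g=|\partial_\nu u|$ on $\partial\Omega$. As a consistency check, for $p=2$ this reproduces the classical linear formula.

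The main obstacle is the integration by parts of the stress tensor. It formally requires $\operatorname{div}_g S$, hence second derivatives of $u$, while $u$ is only $C^{1,\beta}$ and the $p$-Laplacian degenerates or becomes singular where $\nabla u=0$. I would handle this by working on $\Omega_\varepsilon=\{u>\varepsilon\}$, or more robustly by regularizing the flux to $(\varepsilon^2+|\xi|^2)^{(p-2)/2}\xi$. For the regularized problem the states are $C^2$ in the interior, so the divergence identity holds exactly. One then passes to the limit $\varepsilon\to0$, using the uniform $C^{1,\beta}(\overline\Omega)$ bounds for the volume integrals and uniform convergence of the gradients up to the boundary for the boundary term. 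An alternative that avoids second derivatives altogether is to test the equation with $g(\nabla u,X)$ only near $\partial\Omega$, where $\partial_\nu u<0$ by the Hopf principle so that the equation is uniformly elliptic, combined with a cutoff splitting of $X$. The volume form is already rigorous, so only this localization near the boundary needs care.
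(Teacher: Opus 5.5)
Your proposal is correct and follows essentially the same route as the paper. Both arguments pull back by $\Phi_t$, use $T_{p,g}=-p'\,\mathcal E_\Omega(u)$, cancel the $\dot u$ terms via the weak state equation, rewrite the frozen-state derivative through the stress tensor $S_u$ with $\operatorname{div}_g S_u=f\nabla_g u$, and evaluate $S_u(X,\nu)=-\frac1{p'}|\partial_\nu u|^p g(X,\nu)$ on $\partial\Omega$. The only difference is that you explicitly flag the stress-tensor integration by parts and sketch a justification for it, by flux regularization or by Hopf-based localization near $\partial\Omega$; the paper simply asserts that identity in the distributional sense.
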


\begin{proof}
Define
\[
E(\Omega,v)
=
\frac1p\int_\Omega|\nabla_gv|_g^p\,dV_g
-
\int_\Omega fv\,dV_g.
\]
The weak formulation of the state equation is precisely
\[
D_vE(\Omega,u)[\varphi]=0
\qquad
\text{for all }\varphi\in W_0^{1,p}(\Omega).
\]
Taking $\varphi=u$ gives
\begin{equation}
\int_\Omega|\nabla_g u|_g^p\,dV_g
=
\int_\Omega fu\,dV_g
=
T_{p,g}(\Omega).
\label{eq:torsion-energy-identity}
\end{equation}
Consequently,
\begin{equation}
E(\Omega,u)
=
-\frac1{p'}T_{p,g}(\Omega).
\label{eq:energy-torsion-relation}
\end{equation}

Let $u_t=u_{\Omega_t}$ and define the material derivative
\[
\dot u
=
\left.
\frac{d}{dt}
\right|_{t=0}
(u_t\circ\Phi_t).
\]
Since
\[
u_t=0\qquad\text{on }\partial\Omega_t,
\]
we have
\[
u_t(\Phi_t(x))=0
\qquad (x\in\partial\Omega),
\]
and hence
\[
\dot u=0
\qquad\text{on }\partial\Omega.
\]
The shape derivative
\[
u'=\dot u-g(\nabla_g u,X)
\]
therefore satisfies
\[
u'
=
-\partial_\nu u\,g(X,\nu)
\qquad\text{on }\partial\Omega.
\]

To compute the domain derivative intrinsically, set
\[
g_t=\Phi_t^*g.
\]
Then
\[
\left.\frac{d}{dt}\right|_{t=0}g_t
=
\mathcal L_Xg,
\qquad
\left.\frac{d}{dt}\right|_{t=0}dV_{g_t}
=
\operatorname{div}_gX\,dV_g.
\]
For the $p$-Dirichlet energy one obtains
\begin{align}
\left.
\frac{d}{dt}
\right|_{t=0}
\frac1p
\int_\Omega
|\nabla_{g_t}(u_t\circ\Phi_t)|_{g_t}^p\,dV_{g_t}
={}&
\int_\Omega
|\nabla_g u|_g^{p-2}
g(\nabla_g u,\nabla_g\dot u)\,dV_g
\nonumber\\
&+
\int_\Omega
\left[
\frac1p|\nabla_g u|_g^p\operatorname{div}_gX
-
|\nabla_g u|_g^{p-2}
g(\nabla_{\nabla_g u}X,\nabla_g u)
\right]dV_g.
\label{eq:transport-p-energy}
\end{align}
The expression in the second line is
\[
S_u:\nabla X,
\]
where
\begin{equation}
S_u
=
\frac1p|\nabla_g u|_g^p g
-
|\nabla_g u|_g^{p-2}du\otimes du.
\label{eq:stress-tensor-p}
\end{equation}
Here $S_u:\nabla X$ denotes the full contraction
\[
S_u:\nabla X
=
\sum_{i,j=1}^n
S_u(e_i,e_j)\,
g(\nabla_{e_i}X,e_j),
\]
where $(e_1,\ldots,e_n)$ is any local $g$-orthonormal frame.
Equivalently,
\[
S_u:\nabla X
=
\frac1p|\nabla_g u|_g^p\operatorname{div}_g X
-
|\nabla_g u|_g^{p-2}
g(\nabla_{\nabla_g u}X,\nabla_g u).
\]
The source term has derivative
\[
\left.
\frac{d}{dt}
\right|_{t=0}
\int_\Omega
(f\circ\Phi_t)(u_t\circ\Phi_t)\,dV_{g_t}
=
\int_\Omega
\left[
f\dot u
+
u\,g(\nabla_gf,X)
+
fu\,\operatorname{div}_gX
\right]dV_g.
\]
The terms involving $\dot u$ cancel by the weak state equation.
Thus
\begin{align}
dE(\Omega,u)[X]
={}&
\int_\Omega S_u:\nabla X\,dV_g
-
\int_\Omega
u\,g(\nabla_gf,X)\,dV_g
-
\int_\Omega
fu\,\operatorname{div}_gX\,dV_g.
\label{eq:energy-shape-reduced}
\end{align}

We next use the divergence identity for $S_u$. Since
\[
-\operatorname{div}_g
\left(
|\nabla_g u|_g^{p-2}\nabla_g u
\right)
=f,
\]
a direct covariant differentiation gives
\begin{equation}
\operatorname{div}_g S_u
=
f\nabla_g u.
\label{eq:stress-divergence}
\end{equation}
The identity \eqref{eq:stress-divergence} is understood in the distributional sense under the weak formulation. In a smooth regime it follows by covariant differentiation in a local $g$-orthonormal frame, using
$\nabla(\frac1p|\nabla_g u|_g^p)=\nabla^2u(\nabla_g u,\cdot)$,
the covariant product rule, and
$-\operatorname{div}_g(|\nabla_g u|_g^{p-2}\nabla_g u)=f$.
Thus the computation is intrinsic; no Euclidean divergence identity is being assumed.
Moreover,
\[
-u\,g(\nabla_gf,X)-fu\,\operatorname{div}_gX
=
-\operatorname{div}_g(fuX)
+
f\,g(\nabla_g u,X).
\]
Hence, by the divergence theorem and $u=0$ on $\partial\Omega$,
\begin{align}
dE(\Omega,u)[X]
={}&
\int_{\partial\Omega}S_u(X,\nu)\,dS_g
-
\int_\Omega f\,g(\nabla_g u,X)\,dV_g
+
\int_\Omega f\,g(\nabla_g u,X)\,dV_g
\nonumber\\
={}&
\int_{\partial\Omega}S_u(X,\nu)\,dS_g.
\label{eq:energy-boundary-representation}
\end{align}

On $\partial\Omega$ we have
\[
\nabla_g u=(\partial_\nu u)\nu.
\]
Therefore
\[
S_u(X,\nu)
=
\left(\frac1p-1\right)
|\partial_\nu u|^p g(X,\nu)
=
-\frac1{p'}
|\partial_\nu u|^p g(X,\nu).
\]
Consequently,
\begin{equation}
dE(\Omega,u)[X]
=
-\frac1{p'}
\int_{\partial\Omega}
|\partial_\nu u|^p g(X,\nu)\,dS_g.
\label{eq:energy-hadamard-p}
\end{equation}
Finally, differentiating \eqref{eq:energy-torsion-relation} gives
\[
dT_{p,g}(\Omega)[X]
=
-p'\,dE(\Omega,u)[X],
\]
and hence
\[
dT_{p,g}(\Omega)[X]
=
\int_{\partial\Omega}
|\partial_\nu u|^p g(X,\nu)\,dS_g.
\]
Since
\[
|\partial_\nu u|
=
|\nabla_g u|_g
\qquad\text{on }\partial\Omega,
\]
formula \eqref{eq:hadamard-p-torsion} follows.
\end{proof}

\subsection{First variation of the geometric terms}

The first variation formulas for volume and perimeter give
\[
dV_g(\Omega)[X]
=
\int_{\partial\Omega}g(X,\nu)\,dS_g,
\]
and
\[
dP_g(\Omega)[X]
=
\int_{\partial\Omega}
H_{\partial\Omega}g(X,\nu)\,dS_g.
\]

\subsection{The Riemannian shape derivative of $\mathcal J_{p,g}$}

Combining the Hadamard formula \eqref{eq:hadamard-p-torsion} with the first variation formulas for perimeter and volume,
\[
\boxed{
d\mathcal J_{p,g}(\Omega)[X]
=
\int_{\partial\Omega}
\left(
\sigma H_{\partial\Omega}
+k^p
-
|\nabla_g u_\Omega|_g^p
\right)
g(X,\nu)\,dS_g.
}
\]
Set
\[
F_\Omega
=
\sigma H_{\partial\Omega}
+k^p
-
|\nabla_g u_\Omega|_g^p.
\]

\subsection{First-order optimality}

If $\Omega^\ast$ minimizes $\mathcal J_{p,g}$, then for every
one-sided admissible deformation (that is, for sufficiently small
$t\geq0$ such that $\Omega_t$ remains in the admissible class),
\[
d\mathcal J_{p,g}(\Omega^\ast)[X]\geq0.
\]
Therefore
\[
\int_{\partial\Omega^\ast}
F_{\Omega^\ast}\,
g(X,\nu)\,dS_g
\geq0.
\]

\subsection{The free boundary}

Let $x_0\in\Gamma$ be a regular free-boundary point. Localized
deformations supported near $x_0$ can be chosen with either sign of
the normal component, with both $t>0$ and $t<0$ remaining admissible.
Hence
\[
\int_\Gamma
F_{\Omega^\ast}\varphi\,dS_g=0
\]
for every $\varphi\in C_c^\infty(\Gamma)$, and the fundamental lemma
of the calculus of variations gives
\[
F_{\Omega^\ast}=0
\qquad\text{on }\Gamma.
\]
This is exactly the free-boundary condition.

\subsection{The contact boundary}

At a regular contact point $x_0\in\Gamma_0$, the constraint
$C\subset\Omega^\ast$ imposes a one-sided condition on the admissible
normal velocity. With the outward-normal convention used here, an
admissible localized deformation satisfies
\[
g(X,\nu_{\Omega^\ast})\geq0
\qquad\text{on the contact set.}
\]
To localize the first variation, let $U$ be a sufficiently small
neighborhood of $x_0$ in $\partial\Omega^\ast$. For every
$\varphi\in C_c^\infty(U)$ with $\varphi\geq0$, the extension of $X$
can be chosen so that
$g(X,\nu_{\Omega^\ast})=\varphi$ on $U\cap\partial\Omega^\ast$ and
its local flow preserves $C\subset\Omega_t$ for all sufficiently small
$t\geq0$. Since $\Omega^\ast$ is minimizing, the one-sided first variation
therefore gives
\[
\int_{U\cap\partial\Omega^\ast}
F_{\Omega^\ast}\varphi\,dS_g\geq0.
\]
On portions of $U\cap\partial\Omega^\ast$ which are free rather than in
contact, the first-order free-boundary condition already gives
$F_{\Omega^\ast}=0$. Hence
\[
\int_{U\cap\Gamma_0}F_{\Omega^\ast}\varphi\,dS_g\geq0.
\]
Since $x_0$ is arbitrary and $F_{\Omega^\ast}$ is continuous under the
regularity assumptions of the shape derivative, this yields
\[
F_{\Omega^\ast}(x_0)\geq0.
\]
Thus
\[
|\nabla_g u_{\Omega^\ast}|_g^p
\leq
\sigma H_{\partial\Omega^\ast}+k^p
\]
at every regular contact point. This is the contact optimality inequality.

\subsection{Optimality condition at tangential contact}

At a regular contact point $x_0$,
\[
\nu_{\Omega^\ast}(x_0)=\nu_C(x_0),
\]
and Lemma~\ref{lem:curvature-comparison} gives
\[
H_{\partial\Omega^\ast}(x_0)
\leq H_{\partial C}(x_0).
\]
Thus the contact optimality condition implies
\[
|\nabla_g u_{\Omega^\ast}(x_0)|_g^p
\leq
\sigma H_{\partial C}(x_0)+k^p.
\]
This is the inequality needed in the contradiction argument for the
sufficient condition.

\subsection{The resulting variational inequality}

The first-order optimality system is
\[
\boxed{
\begin{cases}
|\nabla_g u_{\Omega^\ast}|_g^p
=
\sigma H_{\partial\Omega^\ast}+k^p,
&\text{on }\partial\Omega^\ast\setminus\partial C,\\[1mm]
|\nabla_g u_{\Omega^\ast}|_g^p
\leq
\sigma H_{\partial\Omega^\ast}+k^p,
&\text{on }\partial\Omega^\ast\cap\partial C.
\end{cases}}
\]

\section{Proofs of the main theorems}

\subsection{Proof of the existence theorem}

Let
\[
m=\inf_{\Omega\in \mathcal A_C(M)}\mathcal J_{p,g}(\Omega).
\]
For $\Omega\in \mathcal A_C(M)$, the uniform energy estimate established
above gives
\[
T_{p,g}(\Omega)
\leq
C\|f\|_{L^{p'}(M)}^{p'}.
\]
Since the perimeter and volume terms are non-negative, we obtain
\[
\mathcal J_{p,g}(\Omega)
=
-T_{p,g}(\Omega)+\sigma P_g(\Omega)+k^pV_g(\Omega)
\geq
-C\|f\|_{L^{p'}(M)}^{p'}.
\]
Hence
\[
m=\inf_{\Omega\in\mathcal A_C(M)}\mathcal J_{p,g}(\Omega)>-\infty.
\]

Choose a minimizing sequence $(\Omega_j)\subset \mathcal A_C(M)$. By the compactness
theorem, after extraction,
\[
\Omega_j\to\Omega^\ast\in \mathcal A_C(M).
\]
By stability,
\[
u_{\Omega_j}\to u_{\Omega^\ast}
\quad\text{strongly in }W^{1,p}(M),
\]
and therefore
\[
T_{p,g}(\Omega_j)\to T_{p,g}(\Omega^\ast).
\]
Moreover,
\[
V_g(\Omega_j)\to V_g(\Omega^\ast)
\]
and the Riemannian perimeter is lower semicontinuous:
\[
P_g(\Omega^\ast)
\leq
\liminf_{j\to\infty}P_g(\Omega_j).
\]
Since the torsional and volume terms converge, we have
\[
\begin{aligned}
\mathcal J_{p,g}(\Omega^\ast)
&=-T_{p,g}(\Omega^\ast)+\sigma P_g(\Omega^\ast)
   +k^pV_g(\Omega^\ast)\\
&\leq -\lim_{j\to\infty}T_{p,g}(\Omega_j)
   +\sigma\liminf_{j\to\infty}P_g(\Omega_j)
   +k^p\lim_{j\to\infty}V_g(\Omega_j)\\
&\leq \liminf_{j\to\infty}
   \mathcal J_{p,g}(\Omega_j)
=m.
\end{aligned}
\]
Since $m$ is the infimum, necessarily
\[
\mathcal J_{p,g}(\Omega^\ast)=m,
\]
and $\Omega^\ast$ is a minimizer.

\subsection{Proof of the comparison result}

Let
\[
w=(u_C-u_\Omega)^+.
\]
Since $C\subset\Omega$ and $u_\Omega\geq0$, we have
$w\in W_0^{1,p}(C)$. Testing the difference of the two equations by
$w$ gives
\[
\int_C
\left(
|\nabla_g u_C|_g^{p-2}\nabla_g u_C
-
|\nabla_g u_\Omega|_g^{p-2}\nabla_g u_\Omega
\right)
\cdot\nabla_gw\,dV_g=0.
\]
Strict monotonicity implies $\nabla_gw=0$, hence $w=0$. Therefore
\[
u_C\leq u_\Omega\quad\text{in }C.
\]
The non-negativity follows from the weak maximum principle.

\subsection{Proof of the contact optimality condition}

Let
\[
x_0\in\Gamma_0
=
\partial\Omega^\ast\cap\partial C
\]
be a regular contact point. Since
\[
C\subset\Omega^\ast,
\]
the minimizer $\Omega^\ast$ cannot be deformed arbitrarily in both
normal directions at $x_0$. Indeed, an admissible deformation must
preserve the inclusion constraint
\[
C\subset\Omega_t.
\]

Let $X$ be a $C^2$ vector field on $M$, let $\Phi_t$ denote its local
flow, and define
\[
\Omega_t=\Phi_t(\Omega^\ast).
\]
We restrict ourselves to vector fields for which
\[
\Omega_t\in\mathcal A_C(M)
\qquad\text{for all }0\leq t<t_0.
\]
Since $\Omega^\ast$ is a minimizer of $\mathcal J_{p,g}$ over
$\mathcal A_C(M)$, we have
\[
\mathcal J_{p,g}(\Omega_t)
\geq
\mathcal J_{p,g}(\Omega^\ast)
\qquad\text{for }0\leq t<t_0.
\]
Therefore,
\[
\left.
\frac{d}{dt}
\mathcal J_{p,g}(\Omega_t)
\right|_{t=0^+}
\geq0,
\]
that is,
\begin{equation}
d\mathcal J_{p,g}(\Omega^\ast)[X]\geq0.
\label{eq:one-sided-contact-variation}
\end{equation}

We now determine the geometric restriction imposed on the normal
component of $X$ at the contact boundary.

At a regular contact point $x_0$, the inclusion
\[
C\subset\Omega^\ast
\]
implies that $\partial C$ and $\partial\Omega^\ast$ have a common
supporting tangent hyperplane. Hence
\[
T_{x_0}\partial C
=
T_{x_0}\partial\Omega^\ast
\]
and, with the outward-normal convention,
\[
\nu_C(x_0)
=
\nu_{\Omega^\ast}(x_0).
\]

For an admissible deformation preserving
\[
C\subset\Omega_t,
\]
the boundary of $\Omega^\ast$ may move away from $C$, but it cannot
move through $C$. Consequently, its normal velocity at a contact point
must satisfy
\begin{equation}
g(X,\nu_{\Omega^\ast})(x_0)\geq0.
\label{eq:contact-normal-velocity}
\end{equation}

This is the fundamental difference between a free-boundary point and a
contact point. At a free-boundary point, both signs of the normal
velocity are admissible, whereas at a contact point only the
outward-pointing direction is admissible.

We next use the first variation formula established previously. For
every sufficiently regular vector field $X$,
\[
d\mathcal J_{p,g}(\Omega^\ast)[X]
=
\int_{\partial\Omega^\ast}
\left(
\sigma H_{\partial\Omega^\ast}
+k^p
-
|\nabla_g u_{\Omega^\ast}|_g^p
\right)
g(X,\nu_{\Omega^\ast})\,dS_g.
\]

Define
\[
F_{\Omega^\ast}
=
\sigma H_{\partial\Omega^\ast}
+k^p
-
|\nabla_g u_{\Omega^\ast}|_g^p.
\]
Then \eqref{eq:one-sided-contact-variation} becomes
\begin{equation}
\int_{\partial\Omega^\ast}
F_{\Omega^\ast}
g(X,\nu_{\Omega^\ast})\,dS_g
\geq0.
\label{eq:global-contact-variation}
\end{equation}

We now localize the argument near $x_0$.

Let
\[
U\subset\partial\Omega^\ast
\]
be a sufficiently small relatively open neighborhood of $x_0$. Choose
\[
\varphi\in C_c^\infty(U),
\qquad
\varphi\geq0,
\qquad
\varphi(x_0)>0.
\]

Because $x_0$ is a regular contact point, one can construct a
sufficiently regular vector field $X$, supported in a small
neighborhood of $U$, such that
\[
g(X,\nu_{\Omega^\ast})=\varphi
\qquad\text{on }U
\]
and
\[
g(X,\nu_{\Omega^\ast})=0
\qquad\text{on }\partial\Omega^\ast\setminus U.
\]
The localization is taken in a neighborhood $U\subset\partial\Omega^\ast$
of the contact point, not necessarily in a neighborhood contained in the
contact set. Choose $\varphi\in C_c^\infty(U)$ with $\varphi\geq0$ and
$\varphi(x_0)>0$. The extension of $X$ can be chosen so that
$g(X,\nu_{\Omega^\ast})=\varphi$ on $U$ and vanishes on
$\partial\Omega^\ast\setminus U$. Since the normal velocity is
non-negative at every contact point in the support of $\varphi$, and is
strictly positive at $x_0$, the resulting local flow preserves
$C\subset\Omega_t$ for all sufficiently small $t\geq0$. Points of the
contact set where $\varphi=0$ are left fixed, while points where
$\varphi>0$ are displaced outward from $C$; away from the contact set the
inclusion is preserved for small time by the positive separation from
$C$. This gives the required one-sided admissible variation.

Substituting this vector field into
\eqref{eq:global-contact-variation}, we obtain
\[
\int_U F_{\Omega^\ast}\varphi\,dS_g\geq0.
\]
If $F_{\Omega^\ast}(x_0)<0$, continuity of $F_{\Omega^\ast}$ gives a
smaller neighborhood on which $F_{\Omega^\ast}<0$. Choosing a
non-negative $\varphi\in C_c^\infty(U)$ with $\varphi(x_0)>0$ and support
inside that neighborhood yields a contradiction. Hence
\[
F_{\Omega^\ast}(x_0)\geq0.
\]

Since $x_0$ was arbitrary, we conclude that
\[
F_{\Omega^\ast}\geq0
\qquad\text{on }\Gamma_0.
\]
Therefore,
\[
\sigma H_{\partial\Omega^\ast}
+k^p
-
|\nabla_g u_{\Omega^\ast}|_g^p
\geq0
\qquad\text{on }\Gamma_0,
\]
or equivalently,
\[
\boxed{
|\nabla_g u_{\Omega^\ast}|_g^p
\leq
\sigma H_{\partial\Omega^\ast}
+k^p
\qquad\text{on }\Gamma_0.
}
\]

which proves the contact optimality inequality stated above.
\section{Examples}
\begin{example}[Radial solutions on geodesic balls]
Let
\[
\Omega=B_g(o,\rho)
\]
be a geodesic ball of radius $\rho$, with $\rho$ smaller than the injectivity
radius. Its boundary is a smooth geodesic sphere.

For this radial calculation, assume in addition that the metric is rotationally
symmetric about $o$, so that on $B_g(o,\rho)$
\[
g=dr^2+G(r)^2g_{\mathbb S^{n-1}},
\qquad J(r)=G(r)^{n-1}.
\]
We also assume the usual smoothness conditions at the center. If the source is
radial, $f(x)=F(r)$, then the state is radial:

\[
u(x)=U(r),
\qquad
r=d_g(o,x).
\]
The $p$-Laplace equation becomes
\[
-\frac1{J(r)}
\frac d{dr}
\left(
J(r)|U'(r)|^{p-2}U'(r)
\right)
=
F(r),
\]
where $J(r)$ is the radial volume density. For the rotationally symmetric (warped) metric above, the outward mean curvature of the geodesic sphere $r=\rho$ is

\[
H_{\partial B_g(o,\rho)}=(n-1)\frac{G'(\rho)}{G(\rho)}.
\]

If $U'(r)\leq0$, integration
from $0$ to $r$ gives
\[
J(r)|U'(r)|^{p-2}(-U'(r))
=
\int_0^rJ(s)F(s)\,ds.
\]
Consequently,
\[
|U'(r)|^p
=
\left[
\frac{\displaystyle\int_0^rJ(s)F(s)\,ds}
{J(r)}
\right]^{p'}.
\]
Thus the sufficient condition on $\partial B_g(o,\rho)$ becomes
\[
\left[
\frac{\displaystyle\int_0^\rho J(s)F(s)\,ds}
{J(\rho)}
\right]^{p'}
>
\sigma H_{\partial B_g(o,\rho)}+k^p.
\]

For completeness, under the rotational symmetry assumption above,
\[
dV_g=J(r)\,dr\,dS_{\mathbb S^{n-1}},
\qquad
\nabla_gu=U'(r)\partial_r,
\qquad
|\nabla_gu|_g=|U'(r)|.
\]
For a radial vector field $Y=a(r)\partial_r$,
\[
\operatorname{div}_gY
=
\frac1{J(r)}\frac{d}{dr}\bigl(J(r)a(r)\bigr),
\]
which gives the displayed one-dimensional equation. Regularity at the
center yields
\[
J(r)|U'(r)|^{p-2}U'(r)\longrightarrow0
\qquad\text{as }r\downarrow0.
\]
Since $F\geq0$, the solution is non-increasing, and hence
\[
|U'(r)|
=
\left[
\frac{\displaystyle\int_0^rJ(s)F(s)\,ds}{J(r)}
\right]^{1/(p-1)}.
\]
Raising to the power $p$ gives the exponent
\[
p'=\frac{p}{p-1}.
\]
This example therefore reduces the sufficient condition to a one-dimensional
flux balance between the accumulated source and the radial volume density.
\end{example}

\begin{example}[The round sphere with radial data]
Consider the round sphere
\[
S_R^n=\{x\in\mathbb R^{n+1}:|x|=R\}
\]
with its round metric. In geodesic polar coordinates around a pole,
\[
g=dr^2+R^2\sin^2(r/R)g_{\mathbb S^{n-1}},
\]
and therefore
\[
J(r)=\left(R\sin\frac rR\right)^{n-1}.
\]
For
\[
C=B_{S_R^n}(o,\rho),
\qquad
0<\rho<\frac{\pi R}{2},
\]
the outward mean curvature is
\[
H_{\partial C}
=
\frac{n-1}{R}\cot\left(\frac\rho R\right).
\]
If $f(x)=F(r)$ with $F\geq0$, then the boundary flux is
\[
|\nabla_g u_C|_g^p\big|_{\partial C}
=
\left[
\frac{
\displaystyle
\int_0^\rho
\left(R\sin\frac sR\right)^{n-1}F(s)\,ds
}
{\left(R\sin\frac\rho R\right)^{n-1}}
\right]^{p'}.
\]
Consequently, under the additional regularity and comparison hypotheses
of Theorem~\ref{thm:sufficient-condition}, that theorem yields
\[
\boxed{
\left[
\frac{
\displaystyle
\int_0^\rho
\left(R\sin\frac sR\right)^{n-1}F(s)\,ds
}
{\left(R\sin\frac\rho R\right)^{n-1}}
\right]^{p'}
>
\frac{\sigma(n-1)}{R}\cot\left(\frac\rho R\right)+k^p.
}
\]

In particular, for the explicit model calculation with a constant radial
source $F(r)=F_0>0$,
this source is not compatible with the standing assumption
$\operatorname{supp}f\subset K\Subset C$ if it is taken on all of $C$;
thus the following formulas are illustrative and are not, by themselves,
an application of Theorem~\ref{thm:sufficient-condition}.
\[
\boxed{
\left[
F_0
\frac{
\displaystyle
\int_0^\rho
\left(R\sin\frac sR\right)^{n-1}\,ds
}
{\left(R\sin\frac\rho R\right)^{n-1}}
\right]^{p'}
>
\frac{\sigma(n-1)}{R}\cot\left(\frac\rho R\right)+k^p.
}
\]
In dimension $n=2$ this becomes completely explicit (for this constant-source radial model):
\[
\int_0^\rho R\sin\left(\frac sR\right)F_0\,ds
=
F_0R^2\left(1-\cos\frac\rho R\right),
\]
and hence
\begin{equation}
\boxed{
\left[
F_0R\tan\left(\frac\rho{2R}\right)
\right]^{p'}
>
\frac{\sigma}{R}\cot\left(\frac\rho R\right)+k^p.
}
\label{eq:spherical-condition-2d}
\end{equation}

For $p=2$, $p'=2$, and the criterion becomes
\[
\left[F_0R\tan\left(\frac\rho{2R}\right)\right]^2
>
\frac{\sigma}{R}\cot\left(\frac\rho R\right)+k^2,
\]
which is the corresponding quadratic torsion structure. The restriction
$0<\rho<\pi R/2$ ensures
\[
H_{\partial C}>0,
\]
so the reference geodesic sphere is strictly mean-convex with respect to
its outward normal. For small $\rho$, the curvature behaves like $1/\rho$.
\end{example}

\begin{example}[Euclidean large-radius limit]
The spherical criterion continuously recovers the Euclidean criterion when
$R\to\infty$ with the geodesic radius $\rho$ fixed. In dimension $n=2$,
\eqref{eq:spherical-condition-2d} reads
\[
\left[
F_0R\tan\left(\frac\rho{2R}\right)
\right]^{p'}
>
\frac\sigma R\cot\left(\frac\rho R\right)+k^p.
\]
Using
\[
\tan t\sim t,
\qquad
\cot t\sim\frac1t
\qquad(t\to0),
\]
we have
\[
R\tan\left(\frac\rho{2R}\right)\longrightarrow\frac\rho2,
\qquad
\frac1R\cot\left(\frac\rho R\right)\longrightarrow\frac1\rho.
\]
Therefore
\[
\boxed{
\left(\frac{F_0\rho}{2}\right)^{p'}
>
\frac\sigma\rho+k^p.
}
\]
This is the corresponding Euclidean model criterion for a disk of radius
$\rho$ with constant source $F_0$. As above, the constant-source model
is illustrative rather than an instance of the standing compact-support
hypothesis. The limit also confirms
that the nonlinear contribution retains the conjugate exponent
$p'=p/(p-1)$, while the spherical mean curvature converges to the
Euclidean curvature $1/\rho$.
\end{example}

\end{document}